\documentclass[12pt]{article}
\usepackage{multicol}
\usepackage[normalem]{ulem}
\usepackage{enumitem}
\usepackage{changepage}

\usepackage{pstricks}
\usepackage{pst-node}

\usepackage{amssymb}
\usepackage{amsthm}
\usepackage{tikz}
\usepackage{tkz-graph}
\usepackage{tkz-berge}
\usetikzlibrary{decorations.markings}
\usepackage{amsmath}
\usepackage{todonotes}
\usepackage{enumerate}
\usepackage{cite, comment}
\usepackage{color}
\usepackage{hyperref}
\usepackage{lineno}

\hypersetup{colorlinks=true, linkcolor=blue, citecolor=blue,urlcolor=blue}

\usepackage{graphicx}
\usepackage{placeins,caption}

\DeclareMathOperator{\sdi}{{\rm sd}_{\iota}}

\usepackage[top=3cm,bottom=3cm,left=2.8cm,right=2.8cm]{geometry}
	\newtheorem{theorem}{Theorem}[section]
	\newtheorem{lemma}[theorem]{Lemma}
	\newtheorem{corollary}[theorem]{Corollary}
	\newtheorem{proposition}[theorem]{Proposition}

	\newtheorem{obs}[theorem]{Observation}
	\theoremstyle{definition}
	
	\newtheorem{claim}[theorem]{Claim}

\usepackage{xargs}                      
\newcommandx{\unsure}[2][1=]{\todo[linecolor=red,backgroundcolor=red!25,bordercolor=red,#1]{#2}}
\newcommandx{\change}[2][1=]{\todo[linecolor=blue,backgroundcolor=blue!25,bordercolor=blue,#1]{#2}}
\newcommandx{\PZ}[2][1=]{\todo[linecolor=OliveGreen,backgroundcolor=OliveGreen!25,bordercolor=OliveGreen,#1]{#2}}
\newcommandx{\MMcos}[2][1=]{\todo[linecolor=blue,backgroundcolor=blue!25,bordercolor=blue,#1]{#2}}
\newcommandx{\improvement}[2][1=]{\todo[linecolor=Plum,backgroundcolor=Plum!25,bordercolor=Plum,#1]{#2}}
\newcommandx{\thiswillnotshow}[2][1=]{\todo[disable,#1]{#2}}

\title{Isolation subdivision number of a graph}
\author{}
\date{}

\begin{document}

\maketitle

\vspace{-10mm}\noindent \textbf{Magda Dettlaff}, University of Gdańsk, 80-309 Poland\\ \textbf{Magdalena Lema\'nska}, Gdańsk University of Technology, 80-233 Gdańsk, Poland\\ \textbf{Merce Mora}, Universitat Politècnica de Catalunya, 08-034 Barcelona, Spain \\ \textbf{Rados\l{}aw Ziemann}, University of Gdańsk, 80-309 Gdańsk, Poland\\ \textbf{Pawe\l{}  \.Zyli\'nski}, University of Gdańsk, 80-309 Gdańsk, Poland

\begin{abstract}
\noindent For a graph $G=(V,E),$ a set $S \subseteq V$ is called an isolating set of $G$ if  the set $V-N[S]$ is independent. The minimum cardinality of an isolating set in $G$ is the  isolation number of $G$, denoted by $\iota(G).$  
Here we introduce  the isolation subdivision number of a graph $G$, denoted by $\sdi(G)$, as the minimum number of edges of $G$ that must be subdivided, where each edge can be subdivided at most once, in order to obtain a graph with isolation number greater than $\iota(G).$ 
We show that the new parameter is well defined for any non-trivial graph different from a star and that it can be arbitrarily large. We present the values of this parameter for some elementary classes of graphs and establish some basic properties. 
We show also that $1\leq \sdi(T)\leq 4$ for any tree $T$ different from a star  and characterize all trees $T$ with $\sdi(T)=1.$ \\[2mm]
\textbf{Keywords}: Isolation, Edge Subdivision, Tree.\\[2mm]
\textbf{Mathematics Subject Classification}:  05C05, 05C69.
\end{abstract}

\section{Introduction}

A set $D$ of vertices of a graph $G=(V(G),E(G))$ is {\it dominating} if every vertex not in $D$ has at least one neighbour in $D$ and the {\it domination number} of $G$ is a cardinality of a minimum dominating set of $G$.  
Clearly, if $N_G[D]$ denotes the set of vertices of $D$ and all their neighbours,
then $D$ is a dominating set of $G$  if and only if  $V(G)=N_G[D]$. 
The concept of {\it isolation} arises by relaxing the condition  $V(G)=N_G[D]$. 
It was introduced by Caro and Hansberg in \cite{adriana}, and then extensively studied in~\cite{Borg1,Borg2,Borg3, Borgrsc,BFK,BFK2,BK23,BG24,BGH24,ChLWX25,ChCZ25,FK21,HAV25,LMSS24}, from different perspectives, to mention just a few. 
In particular, for a family $\mathcal{F}$  of graphs, a set $D$ of vertices of a graph $G$ is $\mathcal{F}$-{\it isolating}  if the graph induced by the set $V(G)- N_G[S]$ contains no copy of a member of $\mathcal{F}$ as a subgraph. 
 The {\it $\mathcal{F}$-isolation number} of $G$, denoted by  $\iota(G, \mathcal{F})$, is the size of a smallest $\mathcal{F}$-isolating set of $G$. 
 For example,  $\{K_1\}$-isolating sets are the usual  dominating sets and
the vertices not dominated by a $ \{ K_2 \}$-isolating set form an independent set.
If $\mathcal{F} = \{K_2\}$, then the terms $\mathcal{F}$-isolating set and $\mathcal{F}$-isolation number, and the notation $\iota(G,\mathcal{F})$, are abbreviated to isolating set, isolation number, and $\iota(G)$, respectively. 

Isolation in graphs was introduced as an extension of domination, 
by removing the constraint that every vertex must be dominated.
One can consider a potential application of this concept in the context of a communication network.
Suppose a graph $G$ represents a communication network and a security agency wants to detect every conversation between two members, i.e., two adjacent vertices, in this network. Then, the elements of a vertex subset $D$ corresponds to the certain centers, which can listen to all nodes in $N_G[D].$  As long as $V(G) \setminus N_G[D]$ is an independent set, the agency can still listen to all conversations by having microphones located in all vertices of set $D.$
We also note that isolation was introduced as {\it vertex-edge domination} ({\it ve-domination}) in \cite{peters} and also studied under this name, independently, in several papers \cite{BChHH15,KMY21,KVK14,L07,LHHF10,sentUVED,ZZ20,ZZ25,pawel}.

\paragraph{Problem statement.} We consider the concept of  isolation in the context of edge subdivision and study the effect of this operation on the isolation number. Namely, the {\it isolation subdivision number}, $\sdi(G)$, of a graph $G$ is the minimum number of  edges that must be subdivided (where each edge can be subdivided at most once) in order to obtain a graph with isolation number greater than $\iota(G)$. 
Since the isolation number of a star $K_{1,n}$ ($n\geq 1$) does not increase when  all its edges are subdivided, we therefore consider only connected non-trivial graphs different from stars. 

The idea of measure how operations (in particular, the edge subdivision) affect domination and related parameters is well 
known in the literature (see, for example, \cite{atapour,sub1, favaron, sub2, sub3, michael}). 
For a graph $G$, the {\it subdivision} of an edge $e=uv$ with a new vertex $x$ (called the {\it subdivision vertex}) is an operation which leads to a graph~$G_e$ with $V(G_{e})=V(G)\cup \{w\}$ and $E(G_{e})=(E(G)\setminus\{uv\})\cup \{ux,xv\}$. 
For $S\subseteq E(G)$,  
we denote by $G_S$ the graph obtained by subdividing all the edges in $S$.
The {\it domination subdivision number} of a graph $G$, denoted $sd_{\gamma}(G)$, 
is the minimum cardinality of a set $S$ of edges of $G$ such that the domination number of $G_S$ is greater
 than the domination number of $G$. 
 This concept for domination was defined by Arumugam 
in a private communication and studied in \cite{sub1,sub2} and \cite{sub3}.
Since this parameter was defined for classical domination, there have been many works defining it for different types of domination like for total domination \cite{michael}, Roman domination \cite{atapour} or paired domination \cite{favaron}.

\paragraph{Organization of the paper.} The work is organized as follows. We start with useful notation. Then we show that the parameter $\sdi(G)$ is well-defined.
We present the values of this parameter for some elementary classes of graphs and give some of its basic properties. Next, we concentrate on the isolation subdivision number of trees. In particular, w show that for any non-trivial tree $T$  different from a star we have $1\leq \sdi(T)\leq 4$. We also characterize all trees $T$ with $\sdi(T)=1.$ 

\paragraph{Notation.}
For notation and graph theory terminology we, in general, follow~\cite{HaHeHe-20,HaHeHe-21,HaHeHe-23}. Specifically, let $G=(V(G),E(G))$ be a connected graph and $u,v\in V(G)$.
For a vertex $v$ of $G$, its {\it neighborhood\/}, denoted by $N_{G}(v)$, is the set of all vertices adjacent to $v$, and the cardinality of $N_G(v)$, denoted by $\deg_G(v)$, is called the {\it degree} of~$v$. 
The {\it closed neighborhood\/} of $v$, denoted by $N_{G}[v]$, is the set $N_{G}(v)\cup \{v\}$. 
In general, for a subset $X\subseteq V(G)$, the {\it neighborhood\/} of $X$, denoted by $N_{G}(X)$, is defined to be $\bigcup_{v\in X}N_{G}(v)$, and the {\it closed\/} neighborhood of $X$, denoted by $N_{G}[X]$, is the set $N_{G}(X)\cup X$. 
A vertex of degree one in $G$ is called a {\em leaf\/} of $G$. A vertex adjacent to a leaf in $G$ is called a {\it support vertex} of $G$ and if the support vertex is adjacent to exactly one leaf $v$, we call $v$  a {\it weak leaf} of $G$ (otherwise $v$ is a {\it strong leaf} of $G$).

The {\it distance} between $u$ and $v$ in $G$, denoted by $d_G(u,v)$, is the length of a shortest path from $u$ to $v$ in $G$.  The {\it diameter} of a graph $G$, written as ${\rm diam}(G)$, is the maximum distance between any two vertices of $G$. By $d_G(v,X)=\min\{d_G(v,x)\colon x\in X\}$ let us denote the distance of a vertex $v$ from the set $X\subseteq V(G)$. 
We say that a vertex $v\in V(G)$ {\it ve-dominates an edge} $e=xy\in E(G)$ if and only if $N_G[v]\cap \{x,y\}\neq \emptyset$, i.e. $d_G(v,\{x,y\})\leq 1$. Hence, the set $D\subseteq V(G)$ is an isolating set of $G$ if each edge of $G$ is ve-dominated by a vertex of $D$. 
Let $S\subseteq V(G)$, $v\in S$ and assume $e=xy\in E(G)$ is ve-dominated by $v$. 
We say that $e$ is a {\it private edge} for $v$ with respect to $S$ if and only if $v$ is the only vertex of $S$ that ve-dominates $e$, i.e., $N_G[z]\cap \{x,y\}=\emptyset$ for every $z\in S\setminus \{v\}$. 

Next, if $v$ is a vertex of $G$ and $e=xy$ is an edge of $G$, we define:
\begin{itemize}
\item $N_G[e]$ -- the set $N_G(x) \cup N_G(y)$;
\item $N^2_G(v)$ --  the set of vertices at distance two from $v$ vertices in $G$, i.e. $N^2_G(v)=\{u\in V(G)\colon d_G(u,v)=2\}$;
\item $E^1_G(v)$ -- the set of edges incident to $v$ in $G$;
\item $E^{11}_G(v)$ -- the set of edges with both endpoints in $N_G(v)$;
\item $E^{2}_G(v)$ -- the set of edges with one endpoint in $N_G(v)$ and the other in  $N^2_G(v)$;
\item $\mathcal{E}(v)=E^1_G(v) \cup E^{11}_G(v) \cup E^2_G(v)$;
\item $X_{\mathcal{E}(v)}$ -- the set of subdivision vertices of $G_{\mathcal{E}(v)}$.
\end{itemize}

Now, let $\prec_{G_{\mathcal{E}(v)}}$ be some vertex ordering of  the vertices in~$G_{\mathcal{E}(v)}$, with $v$ as the smallest element. For a vertex $u \in V(G_{\mathcal{E}(v)})$, we define the {\it $v$-shift} -- denoted by ${\rm sh}_{v}(u)$ -- {\it of $u$ with respect to ordering $\prec_{G_{\mathcal{E}(v)}}$} as follows:  
\begin{itemize}
    \item If $u \in V(G)$, we set ${\rm sh}_{v}(u)=u$. 
    \item If $u \in X_{\mathcal{E}(v)}$, then ${\rm sh}_{v}(u)$ is the smallest vertex 
    in $N_{G_{\mathcal{E}(v)}}(u)$ 
    with respect to the ordering $\prec_{G_{\mathcal{E}(v)}}$.
\end{itemize}
For a subset $S \subseteq V(G_{\mathcal{E}(v)})$, we define the {\it $v$-shift ${\rm sh}_v(S)$ of $S$ with respect to $\prec_{G_{\mathcal{E}(v)}}$} to be the set 
$\{{\rm sh}_v(u): u \in S\}$. 
Directly from the definition of the $v$-shift we obtain the following observations.
\begin{obs}\label{obs-vshift-1}
For any subset $S \subseteq V(G_{\mathcal{E}(v)})$, we have ${\rm sh}_v(S) \subseteq V(G)$ and $|{\rm sh}_v(S)| \le |S|$.     
\end{obs}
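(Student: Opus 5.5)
The claim follows directly from the definition of the $v$-shift, so I will prove its two parts separately, each by looking at a single element of $S$ at a time.

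For the inclusion ${\rm sh}_v(S)\subseteq V(G)$, I take an arbitrary $u\in S$ and split into the two cases of the definition. If $u\in V(G)$, then ${\rm sh}_v(u)=u\in V(G)$. If $u\in X_{\mathcal{E}(v)}$, then $u$ is the subdivision vertex of some edge $xy\in\mathcal{E}(v)$. Its neighbourhood in $G_{\mathcal{E}(v)}$ is therefore exactly $\{x,y\}$, and $x,y\in V(G)$. The smallest element of this two-element set with respect to $\prec_{G_{\mathcal{E}(v)}}$ is well defined and lies in $V(G)$, so ${\rm sh}_v(u)\in V(G)$.

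The only point that needs care is that a subdivision vertex is never adjacent to another subdivision vertex. This holds because each edge is subdivided at most once, and every subdivided edge has both endpoints in $V(G)$. Hence $N_{G_{\mathcal{E}(v)}}(u)\subseteq V(G)$ for every $u\in X_{\mathcal{E}(v)}$, which is what the second case uses.

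For the bound $|{\rm sh}_v(S)|\le|S|$, I observe that ${\rm sh}_v$ is a function on $V(G_{\mathcal{E}(v)})$ and that ${\rm sh}_v(S)$ is by definition the image of $S$ under it. The image of a finite set under any map has cardinality at most that of the set. The inequality may be strict, since two subdivision vertices can shift to the same original vertex, or a subdivision vertex can shift onto a vertex already in $S$. Neither situation affects the upper bound.
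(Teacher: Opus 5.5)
Your proof is correct and follows the paper's route. The paper simply says the observation is immediate from the definition of the $v$-shift; you spell this out by noting that each subdivision vertex has exactly its two original endpoints in $V(G)$ as neighbours, and that ${\rm sh}_v(S)$ is the image of $S$ under a map, so $|{\rm sh}_v(S)|\le|S|$.
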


\begin{obs}\label{obs-vshift-2}
If $S$ is an isolating set of $G_{\mathcal{E}(v)}$, then ${\rm sh}_v(S)$ is an isolating set of $G$.     
\end{obs}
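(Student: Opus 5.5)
The plan is a direct edge-by-edge check: take an arbitrary edge $e=xy\in E(G)$ and exhibit a vertex of ${\rm sh}_v(S)$ that ve-dominates $e$ in $G$, i.e., a vertex lying in $N_G[x]\cup N_G[y]$. Since ${\rm sh}_v(S)\subseteq V(G)$ by Observation~\ref{obs-vshift-1}, this is exactly what is needed. The argument will not use the special shape of $\mathcal{E}(v)$ or of the ordering $\prec_{G_{\mathcal{E}(v)}}$. It only uses the fact that the shift of a subdivision vertex is one of the two endpoints of the edge it subdivides.

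First I will record two facts. (a) If $s\in V(G)$ and $s$ is adjacent in $G_{\mathcal{E}(v)}$ to some $x\in V(G)$, then the edge $sx$ was not subdivided, so $sx\in E(G)$. (b) If $s\in X_{\mathcal{E}(v)}$ is adjacent to $x\in V(G)$, then $s$ subdivides an edge $xw$ of $G$. Hence ${\rm sh}_v(s)\in\{x,w\}\subseteq N_G[x]$, because the neighbours of $s$ in $G_{\mathcal{E}(v)}$ are exactly $x$ and $w$. Combining these, any $s\in S$ with $d_{G_{\mathcal{E}(v)}}(s,x)\le 1$ satisfies ${\rm sh}_v(s)\in N_G[x]$. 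This holds trivially when $s=x$, and by (a) or (b) when $s$ is a neighbour of $x$.

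Next I split into two cases according to whether $e$ was subdivided.

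If $e\notin\mathcal{E}(v)$, then $xy$ is an edge of $G_{\mathcal{E}(v)}$. Since $S$ is isolating, some $s\in S$ is within distance one of $x$ or of $y$ in $G_{\mathcal{E}(v)}$. By the combined fact, ${\rm sh}_v(s)\in N_G[x]\cup N_G[y]$.

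If $e\in\mathcal{E}(v)$ is subdivided by $z$, consider the edge $xz$ of $G_{\mathcal{E}(v)}$, which is ve-dominated by some $s\in S$. If $s$ is within distance one of $x$, we conclude as before. Otherwise $s\in N_{G_{\mathcal{E}(v)}}[z]=\{z,x,y\}$. If $s\in\{x,y\}$, then ${\rm sh}_v(s)=s$. If $s=z$, then ${\rm sh}_v(z)\in\{x,y\}$. In every subcase the shifted vertex ve-dominates $xy$ in $G$.

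I expect no real obstacle. The only point needing care is fact (a): one must make sure that adjacency in $G_{\mathcal{E}(v)}$ between two original vertices really implies adjacency in $G$, which holds since subdivision only removes original edges and never adds new ones between original vertices.
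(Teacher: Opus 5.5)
Your proof is correct and is essentially the paper's approach: the paper gives no argument beyond saying the observation follows ``directly from the definition of the $v$-shift'', and your edge-by-edge check carries that out, with the key point being that a subdivision vertex shifts to an endpoint of the edge it subdivides, so any vertex within distance one of an original vertex $x$ in $G_{\mathcal{E}(v)}$ shifts into $N_G[x]$. As you note, neither the ordering nor the particular edge set $\mathcal{E}(v)$ plays any role, so the same argument works for subdividing any set of edges.
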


Finally, let $L_G$ and $S_G$ denote the set of leaves and the set of supporting vertices, respectively, in $G$. And, for a positive integer $k$, let $[k]$ denote the set $\{1,2,\ldots,k\}$.

\section{Preliminaries}
In this section we prove that the isolation subdivision number is well defined for any non-trivial graph $G$ different from a star, which means that we can always find a set of edges $F\subseteq E(G)$ such that $\iota(G_F)>\iota(G)$. Later, we provide some general  observations concerning the structural properties of graphs. Finally, we show that the isolation subdivision number can be arbitrarily large.

First, observe that subdividing an edge never decreases the isolation number of a graph and increases it by at most one.
\begin{proposition}
If $G$ is a graph, then $$\iota(G)\leq \iota(G_e)\leq \iota(G)+1.$$
\end{proposition}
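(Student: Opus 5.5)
Let $e=uv$ be the subdivided edge and $x$ the subdivision vertex, so $V(G_e)=V(G)\cup\{x\}$ and $E(G_e)=(E(G)\setminus\{uv\})\cup\{ux,xv\}$. The two inequalities are proved separately, each by converting a minimum isolating set of one graph into an isolating set of the other.

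\emph{Upper bound.} Let $S$ be a minimum isolating set of $G$ and set $S'=S\cup\{u\}$. We check that every edge of $G_e$ is ve-dominated by some vertex of $S'$:
\begin{itemize}
\item the new edges $ux$ and $xv$ both contain a vertex of $N_{G_e}[u]$, namely $u$ and $x$ respectively;
\item every edge $yz\neq uv$ of $G$ is still an edge of $G_e$ and is ve-dominated by some $s\in S$ in $G$.
\end{itemize}
For the second item we need that this ve-domination persists in $G_e$. The witness is either $s\in\{y,z\}$, or a neighbour of $s$ lying in $\{y,z\}$. Only the adjacency $u$--$v$ is lost in $G_e$. If the witnessing adjacency was $s=u$ with $v\in\{y,z\}$ (or symmetrically $s=v$ with $u\in\{y,z\}$), then $u\in S'$ and $v\in\{y,z\}$ anyway. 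In the symmetric case, $u\in\{y,z\}$ and $u\in S'$, so the edge is dominated by $u$ itself. Hence $\iota(G_e)\le|S'|\le\iota(G)+1$.

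\emph{Lower bound.} This is the main step. Let $S$ be a minimum isolating set of $G_e$. We map it into $V(G)$ by a shift in the spirit of the $v$-shift: if $x\notin S$, keep $S$; if $x\in S$, replace $x$ by $u$, or drop it if $u\in S$ already. The resulting set $T\subseteq V(G)$ satisfies $|T|\le|S|$, and it remains to show that $T$ isolates $G$.

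Take an edge $yz$ of $G$.
\begin{itemize}
\item If $yz=uv$, we need a vertex of $T$ within distance $1$ of $\{u,v\}$ in $G$. The edge $ux$ of $G_e$ is ve-dominated by some $s\in S$. Then either $s=x$ (now $u\in T$), or $s\in N_{G_e}[u]\setminus\{x\}\subseteq N_G[u]$, or $s=v$. In every case $T$ contains a vertex of $N_G[u]\cup N_G[v]$.
\item If $yz\neq uv$, then $yz\in E(G_e)$ is ve-dominated by some $s\in S$. If $s\neq x$, then $s\in T$, and the distances of $s$ to $y,z$ do not grow when $x$ is contracted back. If $s=x$, then $\{y,z\}$ meets $N_{G_e}[x]=\{x,u,v\}$, so $\{y,z\}$ meets $\{u,v\}$. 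If $u\in\{y,z\}$, then $u\in T$ handles the edge. If $v\in\{y,z\}$, then $u$ is adjacent to $v$ in $G$, so $u$ again ve-dominates $yz$.
\end{itemize}
Thus $\iota(G)\le|T|\le\iota(G_e)$.

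The only delicate point is the case analysis around the lost edge $uv$ and the vertex $x$; replacing $x$ by an endpoint of $e$ resolves both directions.
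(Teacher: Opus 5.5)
\textbf{The upper bound fails as written.} Your lower bound is correct and is the paper's argument: replace $x$ by $u$, with the case check the paper omits.

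The problem is in the case you dismiss. Take an edge $vz$ with $z\neq u$ whose only witness in $S$ is $u$, via the adjacency $uv$. In $G_e$ we have $N_{G_e}[u]=(N_G[u]\setminus\{v\})\cup\{x\}$, so $u$ no longer ve-dominates $vz$ unless $z\in N_G(u)$. Your sentence ``then $u\in S'$ and $v\in\{y,z\}$ anyway'' only restates the hypothesis. It does not produce a vertex of $S'$ within distance one of $\{v,z\}$.

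Adding $u$ cannot repair this case, because here $u$ is already in $S$, so $S'=S$. Concretely, let $G$ be the path with vertices $u,v,b$ in this order and take $S=\{u\}$. This is an $\iota$-set of $G$, since $V(G)\setminus N_G[u]=\{b\}$. Then $G_e$ is the path $u,x,v,b$ and $S'=\{u\}$. Now $V(G_e)\setminus N_{G_e}[u]=\{v,b\}$ contains the edge $vb$, so $S'$ is not isolating.

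\textbf{The fix is the paper's choice $S'=S\cup\{x\}$.} Since $N_{G_e}[x]=\{u,x,v\}$, the vertex $x$ ve-dominates every edge of $G_e$ incident with $u$ or $v$. That covers both new edges $ux,xv$ and every edge whose witness relied on the lost adjacency $uv$. All other witnesses are unaffected, because $N_{G_e}[s]=N_G[s]$ for $s\notin\{u,v\}$. For $s\in\{u,v\}$, only the neighbour across $e$ is lost from $N[s]$.

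Alternatively, add an endpoint of $e$ that is not already in $S$. If $u\notin S$, your case split goes through, since the bad case $s=u$ cannot occur. If both $u,v\in S$, then $S$ itself isolates $G_e$.
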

\begin{proof}
Let $G_e$ be a graph obtained from $G$ by subdividing an edge $e=uv$ by a vertex $x$. Let $D$ be a minimum isolating set of $G_e$. If $x\in D$, then $D'=(D\setminus \{x\})\cup \{u\}$ is an isolating set of $G$. Otherwise, if $x \notin D$, then $D'=D$ is an isolating set of $G$. Hence, in both cases $\iota(G)\leq |D'|\leq |D|=\iota(G_e).$ Now, let $D$ be a minimum isolating set of $G$. In this case $D'=D\cup \{x\}$ is an isolating set of $G_e$, which implies $\iota(G_e)\leq |D'|=|D|+1=\iota(G)+1$.
\end{proof}

\paragraph{The parameter ${\rm sd}(\cdot )$ is well defined.}
We are now in a position to show that for any graph $G$, there exists a set of edges such that subdividing all its edges yields a new graph with isolation number greater than $\iota(G)$.
We begin our analysis with a useful lemma.
\begin{lemma}\label{lem-vshift-1}
If $\iota(G_{\mathcal{E}(v)})=\iota(G)$ then there is no $\iota$-set $D_v$ of $G_{\mathcal{E}(v)}$ such that $|N_{G_{\mathcal{E}(v)}}[v] \cap D_v| \ge 2$.     
\end{lemma}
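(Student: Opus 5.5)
We argue by contraposition: assuming an $\iota$-set $D_v$ of $G_{\mathcal{E}(v)}$ with $|N_{G_{\mathcal{E}(v)}}[v]\cap D_v|\ge 2$ exists, we build an isolating set of $G$ of size at most $|D_v|-1=\iota(G)-1$, contradicting the minimality of $\iota(G)$. The tool is the $v$-shift from Observations~\ref{obs-vshift-1} and~\ref{obs-vshift-2}, taken with respect to an ordering $\prec_{G_{\mathcal{E}(v)}}$ in which $v$ is the smallest element. The key point is that in $G_{\mathcal{E}(v)}$ every edge of $E^1_G(v)$ has been subdivided. Hence $N_{G_{\mathcal{E}(v)}}(v)$ consists only of subdivision vertices $x_{vu}$ with $u\in N_G(v)$. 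Each such $x_{vu}$ has neighbours $v$ and $u$, and since $v$ is minimal in the ordering, ${\rm sh}_v(x_{vu})=v$. Consequently every vertex of $N_{G_{\mathcal{E}(v)}}[v]\cap D_v$ is shifted to $v$.

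The steps are as follows. First, fix $D_v$ and the ordering with $v$ smallest. Second, note that ${\rm sh}_v$ maps the at least two elements of $N_{G_{\mathcal{E}(v)}}[v]\cap D_v$ to the single vertex $v$. Therefore $|{\rm sh}_v(D_v)|\le |D_v|-1$, which sharpens the inequality of Observation~\ref{obs-vshift-1}. Third, by Observation~\ref{obs-vshift-2}, ${\rm sh}_v(D_v)$ is an isolating set of $G$. Together with the hypothesis $\iota(G_{\mathcal{E}(v)})=\iota(G)$, this gives $\iota(G)\le |D_v|-1=\iota(G)-1$, a contradiction.

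The only delicate step is Observation~\ref{obs-vshift-2}, which we take as given. If we had to check it, the argument would run edge by edge through the cases $E^1_G(v)$, $E^{11}_G(v)$, $E^2_G(v)$, and the edges not subdivided. In each case we would confirm that whenever a vertex of $D_v$ ve-dominates one of the two halves $ux, xw$ of a subdivided edge, its shift ve-dominates $uw$ in $G$. This holds because the shift of a subdivision vertex is an endpoint of the original edge. The counting step itself is immediate once we observe that every element of $N_{G_{\mathcal{E}(v)}}[v]$ shifts to $v$.
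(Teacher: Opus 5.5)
Your proof is correct and is the paper's argument: the $v$-shift sends the at least two elements of $N_{G_{\mathcal{E}(v)}}[v]\cap D_v$ to $v$, so $|{\rm sh}_v(D_v)|<|D_v|=\iota(G)$, and Observation~\ref{obs-vshift-2} then gives an isolating set of $G$ that is too small. You also spell out why this collapse happens: every neighbour of $v$ in $G_{\mathcal{E}(v)}$ is a subdivision vertex, and its smallest neighbour in the ordering is $v$. The paper's one-line proof leaves that step implicit.
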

\begin{proof}
Suppose to the contrary that there exists an $\iota$-set $D_v$ of $G_{\mathcal{E}(v)}$ such that $|N_{G_{\mathcal{E}(v)}}[v] \cap D_v| \ge 2$ for some $v \in V(G)$. 
Then we have $|{\rm sh}_v(D_v)|< |D_v|$, and  ${\rm sh}_v(D_v)$ is an isolating set of $G$ by Observation~\ref{obs-vshift-2}, a contradiction with $\iota(G_{\mathcal{E}(v)})=\iota(G)$. 
\end{proof}

\begin{theorem}
    If $G$ is a graph different from a star, then $\sdi(G)$ is well defined.
    \label{welldefined}
\end{theorem}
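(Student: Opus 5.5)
We argue by contradiction, using the set $\mathcal{E}(v)$ and Lemma~\ref{lem-vshift-1}. Let $G$ be a connected non-trivial graph that is not a star. Suppose that $\iota(G_F)=\iota(G)$ for every $F\subseteq E(G)$. In particular $\iota(G_{\mathcal{E}(v)})=\iota(G)$ for every vertex $v$. By Lemma~\ref{lem-vshift-1}, no $\iota$-set of $G_{\mathcal{E}(v)}$ contains two vertices of $N_{G_{\mathcal{E}(v)}}[v]$. We will contradict this by building, for a suitably chosen $v$, an $\iota$-set of $G_{\mathcal{E}(v)}$ that meets $N_{G_{\mathcal{E}(v)}}[v]$ in at least two vertices. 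In $G_{\mathcal{E}(v)}$ the neighbours of $v$ are exactly the subdivision vertices on the edges of $E^1_G(v)$.

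Fix a vertex $v$ and take any $\iota$-set $D$ of $G_{\mathcal{E}(v)}$, so $|D|=\iota(G)$.
\begin{enumerate}
\item We first examine which edges near $v$ must be ve-dominated. In $G_{\mathcal{E}(v)}$, every edge of the form $x_{vu}u$ (with $u\in N_G(v)$) lies at distance at least $2$ from $v$ on the $u$ side. Every subdivided edge of $E^{11}_G(v)\cup E^2_G(v)$ gives two edges around its subdivision vertex. None of these edges are dominated by $v$ alone, so they must be handled by vertices of $D$ at distance at most $3$ from $v$ in $G_{\mathcal{E}(v)}$.
\item Next we normalise $D$. Any vertex of $D$ that is a subdivision vertex $x_{uw}$ with $uw\in E^{11}_G(v)\cup E^2_G(v)$ is replaced by an endpoint in $N_G(v)$ where possible. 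Any vertex lying in $N_G(v)$ is replaced by the subdivision vertex $x_{vu}$. Each time we check that the isolating property is preserved, or that the replacement increases the number of vertices of $D$ inside $N_{G_{\mathcal{E}(v)}}[v]$. This is the kind of exchange used in Observation~\ref{obs-vshift-2}.
\end{enumerate}

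The key choice is $v$. Since $G$ is not a star, it has diameter at least $3$, or it contains a triangle or a larger dense part. We choose $v$ with $\deg_G(v)\ge 2$ such that some vertex lies at distance $2$ from $v$; such a $v$ exists because $G$ is not a star. Then $E^2_G(v)\neq\emptyset$. Take a neighbour $u$ of $v$ that has a neighbour $w\in N^2_G(v)$. The path $v\,x_{vu}\,u\,x_{uw}\,w$ in $G_{\mathcal{E}(v)}$ has length $4$. Its middle edges $u x_{uw}$ and $x_{uw}w$ must be ve-dominated by some vertex of $D$ in $N[u]\cup N[x_{uw}]\cup N[w]$. At the same time, the edges $v x_{vu'}$ must all be ve-dominated, and this forces $D$ to contain $v$ or some $x_{vu'}$ or some $u'\in N_G(v)$.

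We then argue as follows. If $D$ contains no vertex of $N[v]$, the edges $vx_{vu'}$ for all $u'$ force $D\supseteq N_G(v)$. Since $\deg(v)\ge2$, replacing two such $u'$ by $v$ and one $x_{vu'}$ gives a set with two vertices in $N[v]$. If $D$ contains exactly one vertex of $N[v]$, we show that the vertex of $D$ dominating the edges at $x_{uw}$ can be moved to $x_{vu}$, or else $D$ is wasteful. Either conclusion contradicts Lemma~\ref{lem-vshift-1}, or gives a contradiction through ${\rm sh}_v$ and Observation~\ref{obs-vshift-1}.

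The main obstacle is this exchange step. We must show that moving a dominating vertex toward $v$ keeps every edge of $G_{\mathcal{E}(v)}$ ve-dominated, especially edges beyond $N^2_G(v)$, which were not subdivided. If the direct exchange fails, we will first try choosing $v$ extremal, for example as a support vertex adjacent to a leaf, or as an end of a diametral path. This limits which vertices of $D$ can dominate the edges at distance $2$.
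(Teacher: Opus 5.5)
There is a genuine gap in the choice of $v$, and it sits exactly at the exchange step you flag as the main obstacle. Your criterion ($\deg_G(v)\ge 2$ and $N^2_G(v)\neq\emptyset$) does not force $\iota(G_{\mathcal{E}(v)})>\iota(G)$. For such a $v$, Lemma~\ref{lem-vshift-1} then says no exchange can produce an $\iota$-set meeting $N_{G_{\mathcal{E}(v)}}[v]$ twice.

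Take $G=P_6=(a,b,c,d,e,f)$ and $v=b$. Then $\mathcal{E}(b)=\{ab,bc,cd\}$ and $G_{\mathcal{E}(b)}\cong P_9$, with $\iota(P_9)=2=\iota(P_6)$. Each vertex of $P_9$ ve-dominates at most four of its eight edges, so the unique $\iota$-set is $\{b,d\}$. It meets $N_{G_{\mathcal{E}(b)}}[b]=\{x_{ab},b,x_{bc}\}$ only in $b$. So your claim that, when $|D\cap N[v]|=1$, the relevant vertex ``can be moved to $x_{vu}$, or else $D$ is wasteful'' is false here. Your fallback of taking $v$ to be a support vertex goes in exactly the wrong direction, since $b$ is a support. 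Your existence claim also fails for $K_n$ with $n\ge 3$, where no vertex has a vertex at distance $2$.

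The missing idea is to take $v$ \emph{neither a leaf nor a support vertex}. Then every neighbour $v_i$ of $v$ is incident to a second edge. That edge lies in $E^{11}_G(v)\cup E^2_G(v)$ and is therefore subdivided. The extra vertices of $D$ needed to ve-dominate these halves, together with Lemma~\ref{lem-vshift-1}, let the paper shift $D$ into $G$ via ${\rm sh}_v$ (Observation~\ref{obs-vshift-2}) and discard one vertex. This contradicts $|D|=\iota(G)$.

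Such a $v$ need not exist, for example in $P_4$ or any corona. So the paper treats $V(G)=L_G\cup S_G$ separately. Subdividing one pendant edge per support gives $\bar G$ with $\iota(\bar G)=|S_G|$. On the other hand $\iota(G)\le\gamma(G[S_G])\le |S_G|-1$, because $G[S_G]$ is connected of order at least $2$ when $G$ is not a star. Your proposal has no counterpart of this case.

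Your treatment of $D\cap N[v]=\emptyset$ is also invalid as stated. Replacing $u'_1,u'_2\in N_G(v)$ by $v$ and $x_{vu'_1}$ inside $G_{\mathcal{E}(v)}$ loses the edges $x_{u'_2w}w$ with $w\in N^2_G(v)$; this already happens for $P_5$ with $v$ its centre. The paper instead works in $G$, where $v$ ve-dominates every edge incident to $N_G(v)$, and replaces all of $N_G(v)$ by $v$ in ${\rm sh}_v(D)$.
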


\begin{proof} First assume that each vertex of $G$ is either a leaf or a support. Observe that for a graph $G=(V(G),E(G))$, if $V(G)=L_G \cup S_G$ then $\iota(G) \le \gamma(G[S_G])$. On the other hand, if $\bar{G}$ is the graph resulting from subdividing one pendant edge per each relevant supporting vertex in $S_G$, then we have $\iota(\bar{G})=|S_G|$. Consequently, if $\iota(\bar{G}))=\iota(G)$, then $|S_G| \le \gamma(G[S_G])$, thus implying $|S_G|=1$, which immediately results in $G$ being a star. Consequently, if $G$ is not a star, then we must have $\sdi(G) \le |S_G| \le |V(G)|/2$.
    
    Now, assume there is a non-leaf non-support vertex in $G$, let $v \in V(G)$ be such a vertex. Assume $N_{G}(v)=\{v_1,v_2, \ldots, v_{p}\}$, and so $E^1_G(v_1) =\{vv_1,vv_2,\ldots,vv_p\}$, while each edge in $E^{11}_G(v)$ -- if non-empty -- is of the form $v_iv_j$, where $i,j \in [p]$, $i \neq j$. Next, assume that $N^2_G(v)=\{v_{p+1},v_{p+2},\ldots,v_t\}$, and so each edge in $E^{2}_G(v)$ -- if non-empty -- is of the form $v_iv_q$, where $i \in [p]$ and $q \in [t] \setminus [p]$. 

Consider the graph $G_{\mathcal{E}(v)}$. Assume that:
\begin{itemize}
\item each edge $vv_i \in E^1(v_1)$, where $i \in [p]$, is subdivided by $x_i$;
\item each edge $v_iv_j \in E^{11}_G(v)$, if any, where $i,j \in [p]$, $i \neq j$, is subdivided by $y_{i,j}$;
\item each edge $v_iv_q \in E^2_G(v_1)$, if any, where $i \in [p]$ and $q \in [t] \setminus [p]$, is subdivided by  $z_{i,j}$.
\end{itemize}

Suppose now to the contrary that $\iota(G_{\mathcal{E}(v)})=\iota(G)$ and let $D^\circ$ be an $\iota$-set of $G_{\mathcal{E}(v)}$. To ve-dominate the edge $vx_1$, we must have $D^\circ \cap (N_{G_{\mathcal{E}(v)}}[v] \cup \{v_1\} \neq \emptyset$. 
\\[2mm]
{\it Case $1$}: $v \in D^\circ$. We consider the ordering $\prec_{G_{\mathcal{E}(v)}} = ( v,v_1,v_2,v_3, \ldots, v_t, \ldots)$. Since $v$ is not a support vertex in $G$, we must have $\deg_{G_{\mathcal{E}(v)}}(v_i) \ge 2$ for any vertex $v_i$, $i \in [p]$. Consider now the vertex $v_i$, for each $i \in [p]$, and any edge $e_i \in E^1_{G_{\mathcal{E}(v)}}(v_i) \setminus \{x_iv_i\}$. To ve-dominate the edge $e_i$, keeping in mind Lemma~\ref{lem-vshift-1}, 
we must have $$D^\circ \cap \left(N_{G_{\mathcal{E}(v)}}[e_i] \setminus \{x_i\}\right) \neq \emptyset,$$
say $u_{v_i} \in D^\circ \cap \left(N_{G_{\mathcal{E}(v)}}[e_i] \setminus \{x_{i}\}\right)$, which immediately implies ${\rm sh}_{v}(u_{v_i}) \in N_G[v_i] \setminus \{v\}$ for each $i \in [p]$. Consequently, the set ${\rm sh}_{v}({D}^{\circ}) \setminus \{v\}$ is an isolating set of $G$, a contradiction with $\iota(G_{\mathcal{E}(v)})=\iota(G)$. 
\\[2mm]
{\it Case $2$}: $v \notin D^\circ$ and $|D^\circ \cap N_{G_{\mathcal{E}(v)}}(v)| =1$ (see Lemma~\ref{lem-vshift-1}). Without loss of generality assume $x_1 \in D^\circ$. 
We consider two subcases.
\\[2mm]
{\it Subcase $2.1$}: $N^2_G(v) \cap N_G(v_1) \neq \emptyset$; we consider the ordering $\prec_{G_v} = (v, v_1,v_2,v_3, \ldots, v_t, \ldots )$, notice ${\rm sh}_v(x_1)=v$. Then, there exists an edge $v_1v_{q} \in E(G)$ for some $q \in [t] \setminus [p]$, and so the edge $e_1=z_{1,q}v_{q} \in E_{G_{\mathcal{E}(v)}}$. To ve-dominate the edge $e_1$, we must have $D^\circ \cap N_{G_{\mathcal{E}(v)}}[e_1] \neq \emptyset,$
say $u_{v_1} \in D^\circ \cap N_{G_{\mathcal{E}(v)}}[e_1]$, which  implies ${\rm sh}_{v}(u_{v_1}) \in N_G[v_q]$. Next, for each vertex $v_j$, where $j \in [p] \setminus \{1\}$, keeping in mind $|D^\circ \cap N_{G_{\mathcal{E}(v)}}(v)| =1$ and ve-domination of edge $x_jv_j$ by $D^\circ$, one can argue the existence of the relevant vertex $u_{v_j}$ such that ${\rm sh}_{v}(u_{v_j}) \in N_G[v_j] \cap N^1_G(v)$, with ${\rm sh}_v(u_{v_j}) \neq v$. Consequently, since $\deg_G(v) \ge 2$, 
the set ${\rm sh}_{v}(D^\circ) \setminus \{v\}$ is an isolating set of $G$, a contradiction with $\iota(\bar{G})=\iota(G)$.
\\[2mm]
{\it Subcase $2.2$}: $N^2_G(v) \cap N_G(v_1) = \emptyset$. Since $v$ is not a support in $G$, there exists $j \in [p] \setminus \{1\}$ such that edge $v_1v_j \in E(G)$. Now, to ve-dominate the edge $e_j=x_jv_j$, keeping in mind Lemma~\ref{lem-vshift-1}, we must have $D^\circ \cap (N_{G_{\mathcal{E}(v)}}[e_j] \setminus \{v,x_{j}\}) \neq \emptyset$, say $u_j \in D^\circ \cap (N_{G_{\mathcal{E}(v)}}[e_j] \setminus \{v,x_{j}\})$, which now, by setting the ordering $\prec_G = (v,v_j,v_1,v_2,v_3, \ldots, v_{j-1},v_{j+1}, \ldots,v_p,\ldots)$, implies ${\rm sh}_{v}(u_j)=v_j$. Next, 
for each vertex $v_k$, where $k \in [p] \setminus \{1,j\}$, one can also argue the existence of the relevant vertex $u_{v_k}$ such that ${\rm sh}_{v}(u_{v_k}) \in N_G[v_k] \setminus \{v\}$ (recall $|D^\circ \cap N_{G_{\mathcal{E}(v)}}(v)| =1$). Consequently,
the set ${\rm sh}_{v}(D^\circ) \setminus \{v\}$ is an isolating set of $G$ (we emphasize that $v_j \in {\rm sh}_{v}(D^\circ)$), a contradiction with $\iota(G_{\mathcal{E}(v)})=\iota(G)$. 
\\[2mm]
{\it Case $3$}: $D^\circ \cap N_{G_{\mathcal{E}(v)}}[v] = \emptyset$. Then, to ve-dominate all edges $\{vx_{i}\} \in E_{G_{\mathcal{E}(v)}}$, we must have  $v_i \in D^\circ$ for each $i \in [p]$. But then, the set $\big({\rm sh}_{v}(D^\circ) \setminus \{v_1,v_2,v_3,\ldots,v_p\}\big) \cup \{v\}$ is an isolating set of $G$ (since $\deg_G(v) \ge 2$), a final contradiction with $\iota(G_{\mathcal{E}(v)})=\iota(G)$. 
\end{proof}

By Theorem~\ref{welldefined}, the parameter $\sdi(G)$ is well-defined for any graph $G$ with no component isomorphic to a star. In particular we have the following.

\begin{corollary}
    Let $G=(V(G),E(G))$ is a graph with no component isomorphic to a star.
    \begin{itemize}
    \item[$a)$] If $V(G)=S_G \cup L_G$, then $\sdi(G) \le |V(G)|/2$.
    \item[$b)$] If $V(G) \neq S_G \cup L_G$, then 
    $$\sdi(G)\le \min_{v \in V(G) \setminus (S_G \cup L_G)}  |\mathcal{E}(v)| \le \min_{v \in V(G) \setminus (S_G \cup L_G)} \sum_{u \in N_G(v)} \deg_G(u).$$
    
    \end{itemize}
\end{corollary}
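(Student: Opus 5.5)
The corollary is read off from the proof of Theorem~\ref{welldefined}, treating the two cases of that proof separately. Since $\sdi$ is monotone in the sense that exhibiting any set $F$ with $\iota(G_F)>\iota(G)$ gives $\sdi(G)\le |F|$, it suffices to name such a set $F$ in each case and bound its size. When $G$ is disconnected we would apply the argument to one component $H$ (not a star by hypothesis). Subdividing edges of $H$ only changes the $H$-part of a minimum isolating set, because $\iota$ is additive over components. So $\iota(G_F)>\iota(G)$ whenever $\iota(H_F)>\iota(H)$, and it is enough to work with a connected $G$ that is not a star.

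For $a)$, assume $V(G)=S_G\cup L_G$ and let $F$ consist of one pendant edge per support vertex, so $|F|=|S_G|$. The first paragraph of the proof of Theorem~\ref{welldefined} shows $\iota(G)\le\gamma(G[S_G])$ and $\iota(G_F)=|S_G|$. It also shows that equality $\iota(G_F)=\iota(G)$ would force $|S_G|=1$, i.e.\ $G$ a star, so $\iota(G_F)>\iota(G)$. Every support vertex has at least one private leaf and $V(G)=S_G\cup L_G$, so $|S_G|\le|L_G|$, hence $|S_G|\le |V(G)|/2$ and $\sdi(G)\le |V(G)|/2$. In the disconnected case we would apply this to a component $H$, giving $\sdi(G)\le |V(H)|/2\le |V(G)|/2$.

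For $b)$, take any $v\in V(G)\setminus(S_G\cup L_G)$ and set $F=\mathcal{E}(v)$. The remainder of the proof of Theorem~\ref{welldefined} (Cases 1--3) shows exactly that $\iota(G_{\mathcal{E}(v)})>\iota(G)$; that argument used only the assumption that $v$ is neither a leaf nor a support vertex. Thus $\sdi(G)\le|\mathcal{E}(v)|$, and minimizing over all such $v$ gives the first inequality.

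For the second inequality we count edges. Every edge of $\mathcal{E}(v)=E^1_G(v)\cup E^{11}_G(v)\cup E^2_G(v)$ has an endpoint in $N_G(v)$. Summing $\deg_G(u)$ over $u\in N_G(v)$ counts each edge of $E^1_G(v)$ and each edge of $E^2_G(v)$ once, and each edge of $E^{11}_G(v)$ twice. Hence $|\mathcal{E}(v)|\le\sum_{u\in N_G(v)}\deg_G(u)$, and taking the minimum over $v$ finishes the proof.

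The main point needing care is the reduction to components. In particular, in $b)$ the chosen $v$ lies in some component, and that component is not a star because it contains a vertex that is neither a leaf nor a support. Beyond that, the proof is bookkeeping on top of Theorem~\ref{welldefined}.
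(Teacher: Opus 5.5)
Your proposal is correct and follows the paper's implicit argument. Both parts are read off from the proof of Theorem~\ref{welldefined}: the pendant-edge set of size $|S_G|$ gives $a)$, and $F=\mathcal{E}(v)$ with Cases 1--3 gives $b)$; your component reduction, the count $|S_G|\le|L_G|$, and the identity $\sum_{u\in N_G(v)}\deg_G(u)=|E^1_G(v)|+2|E^{11}_G(v)|+|E^2_G(v)|$ make explicit what the paper leaves unsaid. One tacit assumption, which the paper shares, is that no component is trivial, since an isolated vertex $v$ is neither a leaf nor a support yet has $\mathcal{E}(v)=\emptyset$.
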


\paragraph{Some general observations.} 
The following results allow us to focus exclusively on graphs without strong supports.

\begin{lemma}\label{lem_support}
    If $G^*$ is a graph obtained from $G$ by removing all but one leaves for every support of $G$, then $\sdi(G^*)=\sdi(G)$.
\end{lemma}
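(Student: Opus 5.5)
The idea is to prove the two inequalities $\sdi(G)\le \sdi(G^*)$ and $\sdi(G^*)\le\sdi(G)$ separately. Both rest on one observation about pendant edges. Let $u$ be a support vertex and $u\ell$ a pendant edge. Since $N[\ell]=\{u,\ell\}\subseteq N[u]$, the edge $u\ell$ is ve-dominated exactly by the vertices of $N[u]$. Hence all pendant edges at $u$ are ve-dominated by the same vertices. Moreover, in any isolating set a leaf can be replaced by its support without losing any ve-dominated edge.

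First I will show $\iota(G)=\iota(G^*)$. Take a minimum isolating set of $G$ and replace every leaf in it by its support; this gives an isolating set of $G$ avoiding the removed leaves. It is also isolating in $G^*$, since the edge set only shrinks. Conversely, an isolating set of $G^*$ with leaves replaced by supports ve-dominates the retained pendant edge at each support $u$. By the observation it then ve-dominates every pendant edge at $u$, so it is isolating in $G$.

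For $\sdi(G)\le\sdi(G^*)$, I take a minimum set $F^*\subseteq E(G^*)$ with $\iota(G^*_{F^*})>\iota(G^*)$. Then $G_{F^*}$ is obtained from $G^*_{F^*}$ by attaching pendant leaves to some supports $u$ of $G$. I claim that attaching pendant vertices never decreases $\iota$. Given an isolating set $D$ of $G_{F^*}$, replace each added leaf in $D$ by its neighbour. The resulting set has size at most $|D|$, and its restriction to $V(G^*_{F^*})$ is isolating there, since every edge of $G^*_{F^*}$ is still ve-dominated. Hence $\iota(G_{F^*})\ge\iota(G^*_{F^*})>\iota(G^*)=\iota(G)$.

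For $\sdi(G^*)\le\sdi(G)$, I take a minimum $F\subseteq E(G)$ with $\iota(G_F)>\iota(G)$ and build $F'\subseteq E(G^*)$ as follows. Keep every non-pendant edge of $F$. For each support $u$ such that $F$ contains at least one pendant edge at $u$, put the unique retained pendant edge of $u$ into $F'$. Clearly $|F'|\le|F|$. The main step is to show $\iota(G_F)\le\iota(G^*_{F'})$, which then gives $\iota(G^*_{F'})\ge \iota(G_F)>\iota(G)=\iota(G^*)$.

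To prove $\iota(G_F)\le\iota(G^*_{F'})$, let $D'$ be a minimum isolating set of $G^*_{F'}$. At a support $u$ whose retained pendant edge was subdivided into the path $u\,x\,\ell$, the set $D'$ must meet $\{u,x,\ell\}$ in order to ve-dominate $x\ell$. Replacing that vertex by $u$ keeps $D'$ isolating, because $u$ ve-dominates every edge that $x$ or $\ell$ does. So I may assume $u\in D'$. Then in $G_F$ every pendant path $u\,x_i\,\ell_i$ and every unsubdivided pendant edge at $u$ is ve-dominated by $u$. At supports with no subdivided pendant edge, the extra leaves in $G_F$ are covered by the observation. All other edges of $G_F$ and of $G^*_{F'}$ coincide. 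Hence $D'$ is isolating in $G_F$.

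I expect the main obstacle to be this bookkeeping in the last step. The delicate case is a support where $F$ subdivides several pendant edges but $F'$ subdivides only one, and the replacement-by-$u$ trick is exactly what handles it.
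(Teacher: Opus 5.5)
Your proof is correct and uses the same idea as the paper's one-line argument. All pendant edges at a support $u$ are ve-dominated by exactly $N[u]$, so subdividing several of them has the same effect on $\iota$ as subdividing one; this is what your construction of $F'$, with $x$ or $\ell$ replaced by $u$, establishes. You also explicitly prove $\iota(G)=\iota(G^*)$ and that the removed, unsubdivided leaves cannot lower $\iota$ (which gives $\sdi(G)\le\sdi(G^*)$), steps the paper leaves implicit.
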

\begin{proof}
    Let $u$ be a strong support adjacent to $k\geq 2$ leaves $v_1,v_2,\ldots v_k$. Then $\iota(G_{uv_1})=\iota(G_F)$, where $F$ is any subset of pendant edges $uv_i$ for $i\in [k]$.
\end{proof}

By Lemma~\ref{lem_support}, from now, we will consider only graphs without strong supports.

\begin{lemma}\label{lem_isd1}
    If $G$ has an induced path $P_5=(v_1,v_2,v_3,v_4,v_5)$ such that $\deg_G(v_1)=\deg_G(v_5)=1$ and $\deg_G(v_2)=\deg_G(v_4)=2$, then $\sdi(G)=1$.
\end{lemma}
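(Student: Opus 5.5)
The plan is to subdivide a single pendant edge of the $P_5$ and show directly that the isolation number goes up. Concretely, I take $e=v_4v_5$ and subdivide it by a new vertex $x$, so that in $G_e$ the path becomes $v_3v_4xv_5$. By the Proposition, $\iota(G_e)\ge\iota(G)$, and since $\sdi(G)\ge 1$ always holds, it suffices to prove $\iota(G)\le\iota(G_e)-1$. For this I take an $\iota$-set $D'$ of $G_e$ and construct from it an isolating set of $G$ with one vertex fewer.

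\emph{Locating two vertices of $D'$.} Because $\deg_G(v_1)=1$ and $\deg_G(v_2)=2$, the edge $v_1v_2$ is ve-dominated only by vertices of $\{v_1,v_2,v_3\}$. So $D'$ contains some $a\in\{v_1,v_2,v_3\}$. Likewise, because $\deg_G(v_4)=2$, the edge $xv_5$ of $G_e$ is ve-dominated only by vertices of $\{v_4,x,v_5\}$. So $D'$ contains some $b\in\{v_4,x,v_5\}$, and $b\neq a$ since these two sets are disjoint. Here the subdivision is essential: in $G$ itself, $v_3$ would already ve-dominate $v_4v_5$, whereas in $G_e$ it no longer does.

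\emph{The replacement.} Set
$$D=\bigl((D'\setminus\{a,b\})\cup\{v_3\}\bigr)\setminus\{x\}.$$
Note that $x$ can only be removed if $x=b$, since any other occurrence of $x$ in $\{v_4,x,v_5\}$ still belongs to the same gadget. Then $D\subseteq V(G)$ and $|D|\le|D'|-1$. It remains to check that $D$ is an isolating set of $G$.

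\emph{Edges handled by $v_3$.} In $G$, the vertex $v_3$ ve-dominates every edge having an endpoint in $N_G[v_3]$. This covers the edges $v_1v_2$, $v_2v_3$, $v_3v_4$ and $v_4v_5$. It also covers every edge incident to $v_3$ and every edge incident to a neighbour of $v_3$. Every edge that $a$ or $b$ ve-dominated in $G_e$, after mapping $v_4x$ and $xv_5$ back to $v_4v_5$, lies in this list: $N[a]\subseteq N_G[v_3]\cup\{v_1\}$, and the neighbourhood of $b$ lies inside $\{v_3,v_4,x,v_5\}$.

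\emph{Edges handled by the remaining vertices.} Every other vertex $w\in D'\setminus\{a,b\}$ lies in $V(G)\setminus\{v_4,v_5\}$, or it is a second vertex of $\{v_1,\dots,v_5\}$, whose contribution is again absorbed by $v_3$. For such $w$ we have $N_{G_e}[w]=N_G[w]$. Any edge of $G$ other than $v_4v_5$ is also an edge of $G_e$ and is ve-dominated by $D'$. If that edge is ve-dominated by $a$ or $b$, it is covered by $v_3$ as above; otherwise it is ve-dominated in $G$ by the same vertex $w$.

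Hence $\iota(G)\le|D|\le\iota(G_e)-1$, so $\iota(G_e)>\iota(G)$ and $\sdi(G)=1$.

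\emph{Main obstacle.} The only delicate step is the bookkeeping that $v_3$ replaces both $a$ and $b$ without losing any edge. This works because every edge near the pendant paths is within distance one of $v_3$, and every vertex of $D'$ other than $x$ keeps the same closed neighbourhood in $G$ and $G_e$.
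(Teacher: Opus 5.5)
Your proof is correct and is essentially the paper's argument. The paper subdivides the other pendant edge $v_1v_2$, notes that a minimum isolating set of the subdivided graph needs two vertices around the $P_5$, assumes without loss of generality that these are $v_2,v_3$, and deletes $v_2$; your explicit exchange of $a,b$ for $v_3$ is a careful justification of that ``without loss of generality'' step. (Minor slips: the existence of $b$ follows from $\deg_G(v_5)=1$, not $\deg_G(v_4)=2$, which is what you need later for $N_{G_e}[b]\subseteq\{v_3,v_4,x,v_5\}$. Also, $v_4$ and $v_5$ do not keep their closed neighbourhoods, though your absorption-by-$v_3$ argument already covers them.)
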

\begin{proof}
    Let us consider a graph $G'$ obtained from $G$ by subdividing the edge $v_1v_2$ with vertex $x$. Let $D'$ be a minimum isolating set of $G'$. It is easy to observe $|D'\cap \{v_1,v_2,v_3,v_4,v_5,x\}|\geq 2$ and without loss of generality we can assume $\{v_2,v_3\}\subseteq D'$. On the other hand, any minimum isolating set $D$ of $G$ contains only $v_3$ from this path. This implies that $D'\setminus \{v_2\}$ is an isolating set of $G$ and $D\cup \{v_2\}$ is an isolating set of $G'$. Hence, $\iota(G)\leq \iota(G')-1$ and $\iota(G')\leq \iota(G)+1$, respectively, 
    which implies $\iota(G')=\iota(G)+1$, and thus $\sdi(G)=1$.
\end{proof}

\begin{obs}\label{o_1}
    For any graph, it is always possible to choose a minimum isolating set containing neither leaves nor support vertices for which all but one of the neighbors are leaves.
\end{obs}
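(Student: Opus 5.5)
The statement is Observation~\ref{o_1}. I read it as follows: for any graph $G$ there is a minimum isolating set $D$ that contains no leaf of $G$, and no support vertex $s$ whose neighbours, except exactly one, are all leaves. The natural approach is an exchange argument. Start from an arbitrary $\iota$-set, repeatedly replace an offending vertex by a ``better'' neighbour without increasing the cardinality or destroying the isolating property, and show the process terminates.

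\textbf{Step 1: leaves.} Suppose a leaf $\ell\in D$ has unique neighbour $s$. Every edge ve-dominated by $\ell$ has an endpoint in $N_G[\ell]=\{\ell,s\}$. Every such edge is incident with $s$, so it is also ve-dominated by $s$. Hence $(D\setminus\{\ell\})\cup\{s\}$ is isolating and has at most $|D|$ vertices. By minimality it is again an $\iota$-set, and it contains strictly fewer leaves. The only degenerate case is $G=K_2$, where both vertices are leaves. Stars and $K_2$ are excluded throughout, so we may assume $s$ is not a leaf.

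\textbf{Step 2: supports with one non-leaf neighbour.} Let $s\in D$ be a support vertex with $N_G(s)=L\cup\{w\}$, where $L$ consists of leaves and $w$ is not a leaf. The edges ve-dominated by $s$ are of two kinds.
\begin{itemize}
\item The edges $s\ell$ with $\ell\in L$, and the edge $sw$. All of these are incident with $s\in N_G[w]$, so $w$ ve-dominates them.
\item Edges $\ell x$ with $\ell\in L$. There are none besides $\ell s$, since each $\ell$ is a leaf.
\item Edges $wx$. These are ve-dominated by $w$ because $w\in N_G[w]$.
\end{itemize}
So $(D\setminus\{s\})\cup\{w\}$ is again an $\iota$-set.

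\textbf{Step 3: termination.} This is the main obstacle, because $w$ may itself be a leaf or a bad support. Since $w$ is not a leaf, Step 2 never creates a new leaf in $D$. However, $w$ could be a support with only one non-leaf neighbour. If that neighbour is $s$, then $G$ consists of $s$, $w$ and their leaves, i.e. a double star. In that case I handle things directly: if $w\notin D$ before the swap, then $\{w\}$ alone is not isolating, since the edges $s\ell$ remain undominated. So I choose the exchange consistently, e.g. keep $s$ and accept it, or note that such a graph has $\iota=1$ attained by $s$. If I interpret ``all but one neighbours are leaves'' as the paper's hypothesis, this means the observation only needs the tree case beyond stars.

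Otherwise, the swap moves $D$ strictly away from the pendant structures. To make this rigorous, I would use the potential
$$\Phi(D)=\sum_{u\in D}\deg_G(u)\quad\text{or}\quad |D\cap (L_G\cup S_G^{\ast})|,$$
where $S_G^\ast$ is the set of bad supports. I would choose the $\iota$-set minimizing $|D\cap (L_G\cup S_G^{\ast})|$ and check that each swap decreases this quantity, handling the double-star exception separately. When $D$ already contains $w$, the swap simply deletes $s$, which contradicts minimality of $D$. This remaining case analysis is the only place requiring care.
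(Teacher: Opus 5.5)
Your Steps 1 and 2 are correct, and they are the exchanges the paper has in mind; the paper states the observation without proof. Step 3, however, has a genuine gap in two places.

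The first is the double-star case, which you mishandle and which in fact cannot be handled. If a component is a double star with centres $s$ and $w$, then $s\in N_G[w]$. So $w$ ve-dominates every edge $s\ell$, and $\{w\}$ is isolating; your claim that these edges ``remain undominated'' is false. More importantly, in a double star every vertex is either a leaf or a support all of whose neighbours but one are leaves. Hence no isolating set of any size has the required property, and $P_4$ is the smallest counterexample. Saying you will ``keep $s$ and accept it'' therefore proves nothing there. The right conclusion is that the observation is false as stated and needs an extra hypothesis: no component of $G$ is a star (including $K_2$) or a double star. This is harmless for the paper, whose applications are to trees obtained by subdividing trees of diameter at least $5$. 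But the hypothesis has to be stated, not waved away.

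The second problem is that your termination argument does not work as written. Write $S_G^\ast$ for the set of supports with exactly one non-leaf neighbour. A Step 1 swap trades a leaf for its support $s$, and if $s\in S_G^\ast$ the quantity $|D\cap(L_G\cup S_G^\ast)|$ does not change. The degree sum $\sum_{u\in D}\deg_G(u)$ is not strictly monotone either: Step 1 raises it, while Step 2 may lower it or leave it unchanged, depending on how $\deg_G(w)$ compares with $\deg_G(s)$.

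Use instead the lexicographic potential $\bigl(|D\cap L_G|,\,|D\cap S_G^\ast|\bigr)$.
\begin{itemize}
\item Step 1 lowers the first coordinate, because $s$ is not a leaf.
\item Step 2 keeps the first coordinate, since neither $s$ nor $w$ is a leaf, and it lowers the second because $w\notin S_G^\ast$. Indeed, if $w$ were a support with a single non-leaf neighbour, that neighbour would be $s$ and the component would be a double star.
\end{itemize}
Equivalently, when $s\in S_G^\ast$ you can perform the composite swap $\ell\mapsto w$ directly. Combined with your remark that, by minimality, the incoming vertex never already lies in $D$, this gives a terminating exchange process. It proves the observation under the corrected hypothesis.
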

We conclude this part by determining the subdivision isolation number of paths and cycles.
 In~\cite{adriana} was shown that for a path $\iota(P_n)=\lceil \frac{n-1}{4} \rceil$ and  for a cycle $\iota(C_n)=\lceil \frac{n}{4} \rceil$. Simple calculations lead to the following results. 
\begin{obs}
    For a path $P_n$ and for a cycle $C_n$ we have
    
    $\sdi(P_n)=\begin{cases}
1,\quad n\equiv 1\pmod 4\\
2, \quad n\equiv 0\pmod 4\\
3, \quad n\equiv 3\pmod 4\\
4, \quad n\equiv 2\pmod 4
        \end{cases}\ \  and\quad
   \sdi(C_n)=\begin{cases}
1,\quad n\equiv 0\pmod 4\\
2, \quad n\equiv 3\pmod 4\\
3, \quad n\equiv 2\pmod 4\\
4, \quad n\equiv 1\pmod 4.
        \end{cases}
   $
\end{obs}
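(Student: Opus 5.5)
The plan is to use the formulas $\iota(P_n)=\lceil \frac{n-1}{4}\rceil$ and $\iota(C_n)=\lceil \frac{n}{4}\rceil$ from~\cite{adriana}. The key structural fact is that subdividing any $k$ distinct edges of a path $P_n$ (each at most once) yields exactly the path $P_{n+k}$, and subdividing $k$ edges of $C_n$ yields $C_{n+k}$. Hence, for paths,
$$\sdi(P_n)=\min\{k\ge 1\colon \lceil \tfrac{n+k-1}{4}\rceil>\lceil \tfrac{n-1}{4}\rceil\},$$
subject to $k\le n-1$, the number of available edges. Likewise, for cycles $\sdi(C_n)=\min\{k\ge1\colon \lceil \frac{n+k}{4}\rceil>\lceil\frac n4\rceil\}$ with $k\le n$. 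This reduces the problem to elementary arithmetic on ceilings.

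For the path, write $m=n-1$. The function $\lceil m/4\rceil$ increases exactly when $m$ passes from a multiple of $4$ to the next integer. So the least $k$ with $\lceil (m+k)/4\rceil>\lceil m/4\rceil$ equals $4-r$ if $m\equiv r\pmod 4$ with $r\in\{1,2,3\}$, and equals $1$ if $r=0$. Translating back to $n$:
\begin{itemize}
\item $n\equiv1\pmod 4$ gives $k=1$;
\item $n\equiv 0$ gives $r=3$, so $k=1$...
\end{itemize}
Recomputing this carefully: $n\equiv0$ gives $m\equiv3$, so $k=1$. That disagrees with the table, so I would recheck which residue is which. We need $m+k\equiv 1\pmod 4$ with $k$ minimal and positive, i.e. $k\equiv 1-m\equiv 2-n \pmod 4$, taking $k\in\{1,2,3,4\}$:
\begin{itemize}
\item $n\equiv1$ gives $k=1$;
\item $n\equiv0$ gives $k=2$;
\item $n\equiv3$ gives $k=3$;
\item $n\equiv2$ gives $k=4$.
\end{itemize}
This matches the statement. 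For the cycle we need $n+k\equiv1\pmod4$, so $k\equiv 1-n$:
\begin{itemize}
\item $n\equiv0$ gives $1$;
\item $n\equiv3$ gives $2$;
\item $n\equiv2$ gives $3$;
\item $n\equiv1$ gives $4$.
\end{itemize}
This also matches.

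The only real obstacle is the feasibility constraint: we need at least $k$ edges available, and the graph must not be a star. For paths, $P_n$ is a star when $n\le 3$, so we assume $n\ge4$. Then $n-1\ge3$, and the case $k=4$ arises only for $n\equiv2\pmod 4$, forcing $n\ge6$ and hence $n-1\ge5\ge4$. For cycles, $n\ge3$ edges are available, and $k=4$ arises only for $n\equiv1$, forcing $n\ge5$. So the required number of edges always exists. It remains to note that subdividing fewer than $k$ edges gives $P_{n+j}$ (respectively $C_{n+j}$) with $j<k$ and unchanged isolation number. This holds by the minimality in the computation above, which completes the argument.
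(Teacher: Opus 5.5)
Your argument is correct and follows the paper's route. The paper also combines $\iota(P_n)=\lceil\frac{n-1}{4}\rceil$ and $\iota(C_n)=\lceil\frac{n}{4}\rceil$ with the fact that subdividing $k$ edges turns $P_n$ into $P_{n+k}$ and $C_n$ into $C_{n+k}$, and calls the rest a simple calculation; your feasibility check $k\le |E|$ is left implicit there. In a write-up, delete your abandoned first attempt (the rule $k=4-r$ is off by one and should be $k=5-r$ for $r\in\{1,2,3\}$), and keep only the congruence computation $k\equiv 2-n$ for paths and $k\equiv 1-n$ for cycles modulo $4$, with $k\in\{1,2,3,4\}$.
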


\paragraph{The parameter $\sdi$ can be arbitrarily large.}
The aim of this paragraph is to show that, unlike for paths and cycles, for any positive integer $k$ there exists a graph $G$ such that $\sdi(G)=k$. To this end, we consider the isolation subdivision number of complete graphs.

\begin{theorem}

    If $G$ is a complete graph of order $n$, then $$\sdi(K_n)= \bigg\lceil \frac{2n}{3} \bigg\rceil.$$
\end{theorem}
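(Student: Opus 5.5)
Since $K_n$ has a dominating vertex, $\iota(K_n)=1$ for $n\ge 2$, and $K_2=K_{1,1}$ is a star, so we take $n\ge 3$. The plan is to characterize exactly those edge sets $F\subseteq E(K_n)$ with $\iota((K_n)_F)\ge 2$, and then solve the resulting extremal problem. The target characterization is: $\iota((K_n)_F)\ge 2$ if and only if $F$, viewed as a spanning subgraph of $K_n$, covers every vertex and has no component isomorphic to $K_2$. Equivalently, every component of the graph $(V(K_n),F)$ has at least $3$ vertices.

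For the characterization, I would first examine a single original vertex $v$ as a candidate isolating set $\{v\}$ of $G_F=(K_n)_F$. The set $N_{G_F}[v]$ contains:
\begin{itemize}
\item $v$ itself;
\item every original vertex $u$ with $vu\notin F$;
\item the subdivision vertices of the edges of $F$ at $v$.
\end{itemize}
Hence $V(G_F)\setminus N_{G_F}[v]$ consists of the $F$-neighbours of $v$ together with the subdivision vertices of the $F$-edges not incident to $v$. This remainder contains an edge in two situations. The first is when two $F$-neighbours $u,w$ of $v$ exist, because then $uw$ is present either as an edge or via its subdivision vertex $x_{uw}$, which lies in the remainder and is adjacent to $u$. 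The second is when some $F$-neighbour $u$ of $v$ has a further $F$-edge $uw$ with $w\neq v$, since then $u x_{uw}$ is an edge of the remainder. If $v$ has no $F$-neighbour, the remainder consists only of subdivision vertices, which are pairwise non-adjacent, so it is independent.

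Therefore $\{v\}$ fails to be isolating exactly when $v$ has $F$-degree at least $1$ and $v$ does not lie in a $K_2$-component of $F$. Separately, I must check that a single subdivision vertex $x_{ab}$ is never an isolating set when $F$ satisfies the condition. We have $N[x_{ab}]=\{x_{ab},a,b\}$. By the condition, $a$ or $b$ has another $F$-edge, say $bc$ with $c\neq a$, and then the edge $c\,x_{bc}$ survives in the remainder. I expect this step to be the only mildly delicate point, especially for small $n$ such as $n=3$, where $F$ must be a $P_3$; there I would verify directly that $\iota\ge 2$.

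Given the characterization, both bounds are counting arguments. For the lower bound, a spanning forest-like cover whose components each have $c_i\ge 3$ vertices uses at least $\sum_i (c_i-1)=n-k$ edges, where $k$ is the number of components. Since $k\le n/3$, this gives $|F|\ge \lceil 2n/3\rceil$. For the upper bound, I would take disjoint copies of $P_3$ on vertex triples, which gives $2n/3$ edges when $3\mid n$. When $n\equiv 1\pmod 3$, I would replace one $P_3$ by a $P_4$, for $2\lfloor n/3\rfloor+1$ edges. When $n\equiv 2\pmod 3$, I would replace one $P_3$ by a $P_5$ (or use two $P_4$'s), for $2\lfloor n/3\rfloor+2$ edges. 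In each case the edge count equals $\lceil 2n/3\rceil$, which completes the proof.
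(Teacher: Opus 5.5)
Your proof is correct and follows essentially the same route as the paper. The paper's Claims 1 and 2 are your observation that a vertex with no $F$-edge, or a vertex in a $K_2$-component of $(V(K_n),F)$, forms a single-vertex isolating set; the paper then uses the same count $|F|\ge n-k\ge\lceil 2n/3\rceil$. Its Hamiltonian-cycle construction, subdividing $v_iv_{i+1}$ for $i\not\equiv 0 \pmod 3$, is exactly your disjoint union of $P_3$'s with one $P_4$ or $P_5$ at the wrap-around. The only difference is that you prove the converse of the characterization in general, including the check that no subdivision vertex $x_{ab}$ is a singleton isolating set. The paper instead checks its specific construction vertex by vertex and leaves the subdivision-vertex case implicit.
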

\begin{proof}
    Let $C=(v_1,v_2,\ldots ,v_n)$ be a hamiltonian cycle of $K_n$. Let $K_n^*$ be the graph obtained from $G$ by subdividing the edges $v_iv_{i+1}$ with the vertices $x_i$ for each $i\neq 0\pmod 3$. Notice that if $i\equiv 2\pmod 3$, then $v_i$ does not ve-dominate the edge $v_{i-1}v_{i+1}$. If $i\equiv 1 \pmod 3$, then a vertex $v_i$ does not ve-dominate the edge $v_{i+1}x_{i+1}$, while if $i\equiv 0 \pmod 3$, then $v_i$ does not ve-dominate the edge $v_{i-1}x_{i-2}$. All indices are taken modulo 3.    
    Hence, $\iota (K_n^*)>1=\iota(K_n).$ Thus, $\sdi(K_n)\leq \big\lceil \frac{2n}{3} \big\rceil.$

    To prove that $\big\lceil \frac{2n}{3} \big\rceil$ is a lower bound, let $K_n^*$ be a graph obtained from $K_n$ by subdividing some set of edges $F$ of $K_n$ and let associate  with $K_n^*$ a graph $H$ in the following way: $V(H)=V(K_n)$ and $E(H)=F$.
   \begin{claim} \label{c_1}
       If $H$ has a component of order 1, then $\iota(K_n^*)=1.$
   \end{claim}
   \begin{proof}
    Let $\mathcal{C}$  be a component of order 1 of $H$ induced by $\{u\}$. In this case, $u$ is adjacent to every vertex of $K_n- u$ and hence every edge of $K_n^*$ is ve-dominated by $u$ since every edge of $K_n^*$ has at least one  endpoint in $V(K_n)$. Thus, $\iota(K_n^*)=1.$
   \end{proof}
    \begin{claim} \label{c_2}
       If $H$ has a component of order 2, then $\iota(K_n^*)=1.$
   \end{claim}
   \begin{proof}
    Let $\mathcal{C}$ be a component of order 2 of $H$ induced by $\{u,v\}$. Hence, $u$ is adjacent in $K_n^*$ to all the vertices in $V(K_n)\setminus \{u,v\}$, what implies that every edge of $K_n^*$ having at least one endpoint in $V(K_n)\setminus \{v\}$ is ve-dominated by $\{u\}$. The only remaining edge is $xv$ where $x$ is a subdivision vertex of $uv$. It is easy to observe that this edge of $K_n^*$ is also ve-dominated by $\{u\}$. Thus, $\iota(K_n^*)=1.$
   \end{proof}
   Let $C_1, C_2, \ldots, C_h$ be the components of $H.$
   If $\iota(K_n^*)\geq 2$, then by Claim~\ref{c_1} and Claim~\ref{c_2} we obtain that each component of $H$ has at least 3 vertices. Then $h \leq \big\lfloor \frac{n}{3}\big\rfloor.$ Therefore $|E(H)|= \sum_{i=1}^h |E(C_i)| \geq \sum_{i=1}^h (|V(C_i)|-1)=n-h \geq n - \big\lfloor \frac{n}{3}\big\rfloor=\big\lceil \frac{2n}{3}\big\rceil.$ Thus, $\sdi(K_n)\geq \big\lceil \frac{2n}{3} \big\rceil$, which completes the proof.
\end{proof}

\section{Isolation subdivision number of trees}
In this section, we investigate the isolation subdivision number for trees and prove that it is restricted to values within the set $\{1,2,3,4\}$.
\begin{theorem}
    If $T$ is a tree different from a star, then $1\leq \sdi(T)\leq 4$.
\end{theorem}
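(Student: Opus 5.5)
The lower bound $\sdi(T)\ge 1$ is immediate from Theorem~\ref{welldefined}: $T$ is not a star, so $\sdi(T)$ is defined and is a positive integer. The content is the upper bound. By Lemma~\ref{lem_support} we may assume $T$ has no strong supports. The plan is to exhibit a set $F$ of at most four edges near the end of a longest path with $\iota(T_F)>\iota(T)$.

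Fix a diametral path $P=(u_0,u_1,\ldots,u_d)$ with $d\ge 3$, which holds since $T$ is not a star. Root $T$ at $u_d$. Then every child of $u_2$ is a leaf or a support whose children are leaves, and since there are no strong supports each such support has exactly one leaf child. Let $T_{u_2}$ be the subtree at $u_2$. Its shape is determined by three data: the number $a\ge1$ of support children of $u_2$ (each with one pendant leaf), whether $u_2$ itself has a leaf child, and the degree of $u_3$ in the tree.

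I will split into cases by this local shape and try, in each case, to find $F$ among the edges of $T_{u_2}\cup\{u_2u_3\}$, and if needed also an edge at $u_3$. The intended choices are:

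\begin{itemize}
\item If $a\ge 2$, subdivide the pendant edge $u_0u_1$ together with the pendant edge of one other support child of $u_2$. This creates two pendant paths of length $3$ at $u_2$, and such paths should force an extra vertex.
\item If $a=1$ and $u_2$ has degree $2$, we are in a configuration close to the one in Lemma~\ref{lem_isd1}. Here I subdivide the edges $u_0u_1$, $u_1u_2$, $u_2u_3$ and possibly one edge at $u_3$, mimicking the path case, where the worst value is $\sdi(P_n)=4$.
\item If $u_2$ has a leaf child, handle it analogously.
\end{itemize}

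In every case the proof that $\iota$ increases follows one pattern. Take a minimum isolating set $D'$ of $T_F$. Count how many vertices $D'$ must place inside the modified pendant part, which is at least one more than $T$ needs there. Then transform $D'$ into an isolating set of $T$ that is one vertex smaller, via a local replacement like the shift arguments of Lemma~\ref{lem-vshift-1} and Observation~\ref{obs-vshift-2}, or like the argument in Lemma~\ref{lem_isd1}. This is done by replacing the vertices of $D'$ in the local part by $u_2$, or by $u_3$, and removing one vertex.

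The hard step will be the case where the local structure is path-like, namely $a=1$ with $u_2$ and $u_3$ of degree $2$. There a bounded number of subdivisions near the end need not suffice unless the residue of the pendant path length is controlled. I expect to handle it as follows:

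\begin{itemize}
\item Let the pendant path be $u_0,\ldots,u_k$, taken maximal, where $u_k$ is the first vertex of degree at least $3$.
\item Choose how many of its edges to subdivide, between $1$ and $4$, according to $k \bmod 4$, as in the computation of $\sdi(P_n)$.
\item Argue that any minimum isolating set of $T_F$ restricted to this path, together with $u_k$, must use $\lceil\cdot/4\rceil$ more vertices than in $T$, while the rest of the tree is unaffected.
\end{itemize}

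Making this accounting precise is the main obstacle. The difficulty is that $u_k$ may already be in a minimum isolating set of $T$, which covers part of the path. So I would first normalise minimum isolating sets using Observation~\ref{o_1} before comparing.
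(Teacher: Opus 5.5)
The lower bound and the reduction to trees without strong supports are fine. The upper bound, however, is not proved. For $a=1$ you only say ``subdivide $u_0u_1,u_1u_2,u_2u_3$ and possibly one edge at $u_3$'' and ``handle it analogously''. The choice of that fourth edge is exactly where the content lies, because no uniform choice works.

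\emph{$u_3$ supports a leaf $w$.} Take $T$ to be the path $u_0u_1\cdots u_5$ plus the leaf $w$ at $u_3$. Then $\iota(T)=2$, and after subdividing $u_0u_1,u_1u_2,u_2u_3,u_3w$ the set $\{u_1,u_3\}$ is still isolating.

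\emph{$u_3$ is adjacent to a support $z$ of a leaf $w$.} Take the path $u_0u_1\cdots u_6$ plus the path $u_3zw$. Then $\iota(T)=3$, since each of $\{u_0,u_1,u_2\}$, $\{u_3,z,w\}$, $\{u_4,u_5,u_6\}$ must be hit. After subdividing $u_0u_1,u_1u_2,u_2u_3,u_3u_4$ the set $\{u_1,u_3,u_5\}$ is still isolating.

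This is why the paper's proof (with $v_i=u_i$) splits on the structure at $u_2$ and $u_3$:
\begin{itemize}
\item $a\ge 2$ gives $\sdi(T)=1$ directly by Lemma~\ref{lem_isd1}. You attach that lemma to the wrong case: the path $u_0u_1u_2xy$ through a second support child $x$ of $u_2$ is precisely its configuration.
\item If $u_2$ supports a leaf $w$, subdivide $u_0u_1,u_1u_2,u_2w$.
\item If $u_3$ has a leaf $w$ at distance two with support $z$, subdivide $u_0u_1,u_1u_2,u_2u_3,zw$.
\item If $u_3$ supports a leaf $w$, subdivide $u_0u_1,u_1u_2,u_3u_4,u_3w$.
\item Otherwise, subdivide $u_0u_1,u_1u_2,u_2u_3,u_3u_4$.
\end{itemize}
In each case a minimum isolating set $D'$ of $T_F$ is first normalised via Observation~\ref{o_1} to contain specific vertices, such as $u_1,u_2$ or $u_1,u_3$. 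An explicit exchange then yields an isolating set of $T$ of size $|D'|-1$. Your proposal names this template but never carries it out in any case.

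The step you single out as the main obstacle is not one, and no control of $k \bmod 4$ is needed. When $u_1,u_2,u_3$ all have degree $2$, subdividing $u_0u_1,u_1u_2,u_2u_3,u_3u_4$ just lengthens a pendant path by four. Write it in $T_F$ as $w_0w_1\cdots w_8$ with $w_0=u_0$ and $w_8=u_4$. Since $D'$ must meet $\{w_0,w_1,w_2\}$, you may replace any vertex of $D'$ in $\{w_0,w_1\}$ by $w_2$. Once $w_2\in D'$, you may replace any vertex in $\{w_3,w_4\}$ by $w_5$. Neither replacement loses ve-domination of any edge, so you may assume $D'\cap\{w_0,\ldots,w_4\}=\{w_2\}$. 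Then $D'\setminus\{w_2\}$, transported by $w_{i+4}\mapsto u_i$, is an isolating set of $T$. Hence $\iota$ rises by exactly one, regardless of residues and of whether $u_k$ lies in some $\iota$-set of $T$.
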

\begin{proof} If ${\rm diam}(T)\leq 4$ it is easy to check that the result holds. 
    Let $T$ be a tree with ${\rm diam}(T)\geq 5.$ Let $P=(v_0, v_1, \ldots, v_{\ell})$ be a longest path of $T$ such that $\deg_T(v_2)$ is as large as possible and let $D$ be a minimum isolating set of $T.$ 
    Our assumption implies that $\deg_T(v_1)=\deg_T(v_{\ell-1})=2.$ If $v_2$ is adjacent to a support vertex different from $v_1$ and $v_3$ (if $v_3 \in S_T$) then  $\sdi(T)=1$  by Lemma~\ref{lem_isd1}. Hence the only support adjacent to $v_2$  is $v_1$ and $v_3$ (provided $v_3 \in S_T$). Then we have two possibilities:  either $v_2$ is a support vertex or $\deg_T(v_2)=2.$

    Assume first $v_2$ is a support adjacent to a leaf $w.$ Then $\deg(v_2)=3.$  We subdivide the edges $v_0v_1, v_1v_2$ and $v_2w$ with vertices $x_1, x_2, x_3,$ respectively, obtaining a tree $T'.$ Since $v_0, w$ are leaves in $T',$ by Observation \ref{o_1}, there is a minimum isolating set $D'$ of $T'$ such that $\{v_1, v_2\} \subseteq D'.$ This implies that $D' \setminus \{v_1\}$ is an isolating set of $T$ and $\iota(T) \leq |D'|-1 =\iota(T')-1$ and we are done. So $\deg_T(v_2)=2$ (and by symmetry, $\deg_T(v_{\ell-2})=2$).

Assume now there is a leaf $w$ at distance two from $v_3$ and let $z$ be a (weak) support adjacent to $w.$  We subdivide the edges $v_0v_1, v_1v_2$, $v_2v_3$ and $wz$ with vertices $x_1, x_2, x_3,x_4$ respectively, obtaining the tree $T'.$ Let $D'$ be a minimum isolating set of $T'$ such that (by Observation \ref{o_1}) $\{v_1,z\} \subseteq D'$. Observe that $|\{v_2,x_3,v_3\} \cap D'|\geq 1$ and without loss of generality let $v_3 \in D'.$ In this case $D' \setminus \{z\}$ is an isolating set of $T$ of cardinality $\iota(T')-1.$ Thus $\iota(T) \leq \iota(T')-1$ and we are done. 

Now we can assume that $v_3$ has no distance-2-leaves; so all leaves are at distance three or one from $v_3$. Consider the case when $v_3$ is a support vertex adjacent to a leaf $w$ (which implies $\ell\geq 6$). 
We subdivide the edges $v_0v_1, v_1v_2$, $v_3v_4$ and $v_3w$ with vertices $x_1, x_2, x_3,x_4$ respectively, obtaining the tree $T'.$ Let $D'$ be a minimum isolating set of $T'$ such that (by Observation \ref{o_1}) $\{v_1,v_3\} \subseteq D'$. Observe that we can choose $D'$ in such a way that $x_3 \notin D'.$ In this case $(D' \setminus \{v_1,v_3\})\cup \{v_2\}$ is an isolating set of $T$ of cardinality $\iota(T')-1.$ Thus $\iota(T) \leq \iota(T')-1$ and we are done. 

Now we can assume that $v_3$ is not a support and has no distance-2-leaves. In this case $\deg_T(v_3)=2$ or only paths $P_4$ are attached to $v_3.$ In this case, we subdivide the edges $v_0v_1, v_1v_2$, $v_2v_3$ and $v_3v_4$ with vertices $x_1, x_2, x_3,x_4$ respectively, obtaining the tree $T'.$ Let $D'$ be a minimum isolating set of $T'$ such that (by Observation \ref{o_1}) $\{v_1\} \subseteq D'.$  Observe that $|\{v_2,x_3,v_3\} \cap D'|\geq 1$ and without loss of generality let $v_3 \in D'.$ Let $T''$ be a component of $T'-x_4v_4$ containing $v_4$. Since $x_4$ is dominated by $v_3$ in $T',$ notice that $D' \cap V(T'')$ is a minimum isolating set of $T''.$ In this case $D' \setminus \{v_3\}$ is an isolating set of $T$ of cardinality $\iota(T')-1,$ so $\iota(T) \leq \iota(T')-1$ and we are done. 
\end{proof}

\subsection{Trees with isolation subdivision number equal to 1}
Now we are in position to characterize all trees $T$ with $\sdi(T)=1.$
Denote by  $\mathcal{N}_{\iota}(G)$ the set of those vertices which are not contained in any $\iota$-set of $G.$





Let $T=(V_T,E_T)$ be a tree and consider a vertex $r \in V_T$. Let $T_r$ denote the tree $T$ rooted at $r$, and let $C_r(v)$ denote the set of all children (in $T_r$) of a vertex $v \in V_T$. Finally, let $T_{r,x}$ denote the subtree in $T_r$, rooted at $x$, containing $x$ and all its descendants in $T_r$ (and so $T_{r,r}=T_r$). 

\begin{theorem}\label{theo:sd1} For any tree $T$ different from a star, $\sdi(T)=1$ if and only if there exists a vertex $v\in V_T$ such that $N_T[v] \subseteq \mathcal{N}_{\iota}(T).$
\end{theorem}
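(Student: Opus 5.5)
The plan is to prove the two implications separately. Throughout, for an edge $e=ab$ of $T$ let $x$ be its subdivision vertex in $T_e$. The basic observation is that in $T_e$ we have $N_{T_e}[a]\cup N_{T_e}[x]=(N_T[a]\setminus\{b\})\cup\{x,b\}$. Hence a set $D\subseteq V_T$ ve-dominates the edge $ax$ of $T_e$ iff $D\cap N_T[a]\neq\emptyset$, and by symmetry it ve-dominates $xb$ iff $D\cap N_T[b]\neq\emptyset$. Every other edge of $T_e$ is an edge of $T$ with the same endpoint neighbourhoods, apart from the change $a\leftrightarrow b$ becoming $a\leftrightarrow x\leftrightarrow b$.

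\emph{Sufficiency.} Suppose $N_T[v]\subseteq\mathcal{N}_\iota(T)$, pick any edge $e=vu$, and suppose $\iota(T_e)=\iota(T)$ with $D'$ an $\iota$-set of $T_e$.
\begin{enumerate}
\item If $x\in D'$, then by the Proposition (replace $x$ by $u$) we obtain an isolating set of $T$ of size $\iota(T)$ containing $u\in N_T[v]$. This contradicts $u\in\mathcal{N}_\iota(T)$.
\item If $x\notin D'$, then $D'\subseteq V_T$ is an isolating set of $T$: the edge $vu$ is ve-dominated by whatever ve-dominates $vx$, and all other edges are ve-dominated as before. So $D'$ is an $\iota$-set of $T$. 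By the observation above, $D'$ meets $N_T[v]$, again a contradiction.
\end{enumerate}
Thus $\iota(T_e)>\iota(T)$, so $\sdi(T)=1$.

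\emph{Necessity,} via the contrapositive. Assume every closed neighbourhood $N_T[w]$ meets some $\iota$-set of $T$. By the observation, it suffices to show that for every edge $ab$ there is a \emph{single} $\iota$-set $D$ of $T$ meeting both $N_T[a]$ and $N_T[b]$. Such a $D$ is then isolating in $T_{ab}$, giving $\iota(T_{ab})=\iota(T)$ for every edge, so $\sdi(T)\ge 2$. Suppose no such set exists. Take $\iota$-sets $D_a,D_b$ with $D_a\cap N_T[a]\neq\emptyset=D_a\cap N_T[b]$ and $D_b\cap N_T[b]\neq\emptyset=D_b\cap N_T[a]$; in particular $a,b\notin D_a\cup D_b$. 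Let $T^a,T^b$ be the components of $T-ab$ containing $a$ and $b$. The only edge between them is $ab$ and $a,b\notin D_a\cup D_b$, so vertices of $T^b$ in $D_a$ ve-dominate no edge of $T^a$, and similarly with the roles swapped. Hence $D=(D_a\cap V(T^a))\cup(D_b\cap V(T^b))$ ve-dominates all edges of $T^a$ and $T^b$, and also $ab$. Moreover $D$ meets both $N_T[a]$ and $N_T[b]$. The complementary set $D^*=(D_a\cap V(T^b))\cup(D_b\cap V(T^a))$ likewise ve-dominates every edge except possibly $ab$, and $|D|+|D^*|=2\iota(T)$.

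The main obstacle is the counting. If $D^*$ is isolating, then $|D^*|\ge\iota(T)$, so $|D|\le\iota(T)$ and $D$ is the desired $\iota$-set. But $D^*$ misses $N_T[a]\cup N_T[b]$, so it fails on $ab$, and the naive bound $|D^*\cup\{a\}|\ge\iota(T)$ only gives $|D|\le\iota(T)+1$. To gain the missing unit I would do a local exchange in $D^*$. The edges of $T^a$ at $a$ are ve-dominated by $D_b\cap V(T^a)$ only through vertices at distance exactly $2$ from $a$, and symmetrically on the $b$ side. I would try to replace one such distance-$2$ vertex $w\in D_b\cap V(T^a)$ by its neighbour $y\in N_T(a)$, choosing $w$ (via the rooting $T_a$ and the subtrees $T_{a,y}$) so that the edges privately ve-dominated by $w$ remain covered. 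Then $D^*-w+y$ would be isolating of size $|D^*|$, which forces $|D|\le\iota(T)$. If no such exchange is possible, every such $w$ has a private edge deep in $T_{a,y}$. In that case I would instead modify $D_b$ itself, replacing $w$ by $y$ inside a suitable subtree and recombining with $D_a$ along the edge $ay$. The aim is to produce an $\iota$-set meeting $N_T[a]\cap N_T[b]$-adjacent vertices, and the final step would be an induction on the depth of the offending subtree to show this process terminates in an $\iota$-set meeting both $N_T[a]$ and $N_T[b]$.
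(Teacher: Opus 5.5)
Your sufficiency argument is correct, and it is tidier than the paper's: subdividing an arbitrary edge $vu$ and splitting on whether the subdivision vertex lies in an $\iota$-set of $T_{vu}$ covers, in one argument, what the paper handles in three cases ($v$ a leaf, a support, or neither).

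\textbf{The gap: your reduction in the necessity direction is false.} You claim that an $\iota$-set $D$ of $T$ meeting both $N_T[a]$ and $N_T[b]$ is isolating in $T_{ab}$. Your basic observation handles the edges $ax$ and $xb$ correctly. But the phrase ``apart from the change'' hides what the subdivision actually does. In $T_{ab}$ the vertex $b$ no longer ve-dominates the edges $ad$ with $d\in N_T(a)\setminus\{b\}$, and symmetrically $a$ no longer ve-dominates the edges $bd'$.

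A concrete counterexample: take $T=P_5=(1,2,3,4,5)$ and $ab=23$.
\begin{itemize}
\item The unique $\iota$-set $\{3\}$ meets both $N_T[2]$ and $N_T[3]$.
\item Yet $T_{23}\cong P_6$ has isolation number $2$, because the edge $12$ is no longer ve-dominated.
\end{itemize}
The same example shows that the hypothesis ``every closed neighbourhood meets an $\iota$-set'' cannot be used only at $a$ and $b$. In $P_5$ it holds at $2$ and $3$ and fails only at the leaves. So any correct argument for the edge $ab$ must invoke the hypothesis at vertices other than $a$ and $b$, and your plan never does this.

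\textbf{The branch you call the main obstacle is vacuous.} Suppose $|D^*|\le\iota(T)-1$. Then $D^*\cup\{a\}$ is an $\iota$-set containing $a\in N_T[a]\cap N_T[b]$, which contradicts the assumption of that branch. Hence $|D^*|\ge\iota(T)$, so $|D|\le\iota(T)$ immediately. No exchange or induction is needed there.

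\textbf{The real difficulty is the case you dismissed:} $\iota$-sets containing one endpoint of the edge, with a private edge at the other endpoint. The paper works directly from an edge $e=uv$ with $\iota(T_e)>\iota(T)$.
\begin{itemize}
\item If $e$ is pendant with leaf $u$, then $N_T[u]=\{u,v\}\subseteq\mathcal{N}_\iota(T)$.
\item If some $\iota$-set $D$ contains $v$, then neither $D$ nor $(D\setminus\{v\})\cup\{x\}$ is isolating in $T_e$. This forces private edges $uu'$ and $ww'$ of $v$, with $w\in N_T(v)\setminus\{u\}$, for every $\iota$-set containing $v$. A count over the branches of $T$ rooted at $v$ then gives $N_T[u']\subseteq\mathcal{N}_\iota(T)$.
\item If $u,v\in\mathcal{N}_\iota(T)$, the same reasoning is run from a neighbour $q\neq u$ of $v$ that lies in an $\iota$-set, and it yields $N_T[u]\subseteq\mathcal{N}_\iota(T)$.
\end{itemize}
In the middle case the witness $u'$ is not an endpoint of the subdivided edge, which is exactly what your approach cannot reach.
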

\begin{proof} ($\Leftarrow$)
Assume first that there exists a vertex $v$ such that $N_T[v] \subseteq \mathcal{N}_{\iota}(T)$. If $v$ is a leaf and $u$ is its support vertex, let $T'$ be the tree obtained by joining a new vertex $w$ to $v$ and consider any  $\iota$-set $D'$ of $T'$. Then  $|D'\cap \{u,v,w\}|=1$, and the set $D=(D'-\{u,v,w\})\cup \{u\}$ is an isolating set of $T$ containing $u$ such that $|D'|=|D|$. Since $N_T[v] \subseteq \mathcal{N}_{\iota}(T)$, we have $|D|>\iota (T)$. Hence, $\iota(T')=|D'|=|D|>\iota (T)$, and so $\sdi (T)=1$. Next, if $v$ is a support vertex, then the leaf $u$ attached to $v$ satisfies $N_T[u] \subseteq \mathcal{N}_{\iota}(T)$ and the former case applies. Hence, $\sdi(T)=1$. Finally, if $v$ is neither a leaf nor a support vertex of $T$,  then $v$ is the center of a star of order at least 3 such that no vertex of this star is a leaf of $T$.  Let $T'$ be the tree obtained by subdividing exactly one edge $uv$ of this star with a vertex $w$. Let $D'$ be an $\iota$-set of $T'$.
If $w\notin D'$, then at least one vertex from $N_T[v]$ is in $D'$ (otherwise, the edge $vw$ would not be ve-dominated by $D'$). Hence, $D'$ is also an isolating set of $T$ containing at least one vertex of $N_T[v]$. Thus, $\iota (T')=|D'|>\iota (T)$, implying that $\sdi (T)=1$.
If $w\in D'$ and exactly one of the vertices $u$ or $v$  belongs to $D'$, then  $D=(D'\setminus \{u,v,w\})\cup \{u,v\}$ is an isolating set of $T$ such that $\iota (T')=|D'|=|D|>\iota (T)$, because $N_T[v]\subseteq \mathcal{N}_{\iota}(T)$. Thus, $\sdi (T)=1$.
If $w\in D'$ and neither $u$ nor $v$  belong to $D'$, then  $D=(D'\setminus \{w\})\cup \{v\}$ is an isolating set of $T$
such that $\iota (T')=|D'|=|D|>\iota (T)$, because $N_T[v]\subseteq \mathcal{N}_{\iota}(T)$. Hence, $\sdi (T)=1$, which eventually finishes $(\Leftarrow)$. 

\medskip
\noindent $(\Rightarrow)$ Let $T=(V_T,E_T)$ be a tree with $\sdi (T)=1$ and assume that the subdivision of the edge $e=uv$ increases the isolation number of $T$. Let $T_e$ be the tree obtained from $T$ by subdividing the edge $e$ (and thus $\iota(T_e)>\iota (T)$). 

\medskip
\noindent \textbf{Case 1}: $e$  is a pendant edge; w.l.o.g.~assume that $u$ is a leaf. Let $D$ be an $\iota$-set of $T$. Then, $D$ does not contain both $u$ and $v$. Hence, if $u$ or $v$ belong to some $\iota$-set $D$ of $T$, then $D'=(D \setminus\{u,v\})\cup \{v\}$ is an $\iota$-set of $T_e$. Thus, $\iota (T_e)=|D_e|=|D|=\iota (T)$. Hence $u,v\in N_T[u]\subseteq \mathcal{N}_{\iota}(T)$. 
  
\medskip
\noindent \textbf{Case 2}. $e$  is not a pendant edge and there exists an $\iota$-set $D$ of $T$ such that either $u \in D$ or $v \in D$ (clearly, we cannot have both $u,v \in D$, because otherwise $\iota()T)=\iota(T_e)$, a contradiction); w.l.o.g.~assume that $v \in D$. It follows from the choice of  
$e$ and the above assumptions that there exist two edges $e_u,e_w$ in $E^2_T(v)$ such that:
\begin{itemize}
\item[$(a)$] $e_u$ and $e_w$ do not share a vertex;
\item[$(b)$] exactly one of them, say $e_u$, shares vertex $u$ with $e$;
\item[$(c)$] $e_u$ and $e_w$ are private edges for $v$ with respect to \textbf{any $\iota$-set $D^\ast$ of $T$ with $v \in D^\ast$}.
\end{itemize}
\noindent Consequently, assuming $e_u=uu'$ and $e_w=ww'$ (where $w$ is a neighbor of $v$), and still keeping in mind $\iota(T_e)>\iota(T)$, we have the following observation (see Figure~\ref{fig:Case2} for an illustration).

\begin{figure}[h]
\begin{center}

\begin{tikzpicture}[scale=0.8,
    thick,
    bnode/.style={circle, fill=black, draw=black, inner sep=1.8pt},
    wnode/.style={circle, fill=white, draw=black, inner sep=1.8pt, minimum size=0.25cm},
    triangle/.style={draw=black, thick, fill=white, join=round}
]

\draw[line width=12pt, lightgray, cap=round] 
    (-2, 1.8) .. controls (-3.1, 0) and (-2, -3.1) .. 
    (0, -3.1) .. controls (2, -3.1) and (3.1, 0) .. (2, 1.8);

\draw[line width=12pt, lightgray, cap=round] 
    (-0.9, 3.1) .. controls(-0.5,3.3) and (0.5,3.3).. (0.9, 3.1);

\node[font=\Large] at (-2, 2.3) {$^{C'}$};
\node[font=\Large] at (1.5, 3) {$^{C''}$};

\node[bnode, label=below:$_v$] (v) at (0, 2) {};
\node[wnode, label=right:$_u$] (u) at (-1.5, 0.5) {};
\node[wnode, label=left:$_w$]  (w) at (1.5, 0.5) {};
\node[wnode, label=right:$_{u'}$] (up) at (-1.5, -1) {};
\node[wnode, label=left:$_{w'}$]  (wp) at (1.5, -1) {};

\draw[line width=2pt] (v) -- (u) node[midway, above left, text=black] {$e$};
\draw (v) -- (w);
\draw[line width=2pt] (u) -- (up) node[midway, right, text=black] {$e_u$};
\draw[line width=2pt] (w) -- (wp) node[midway, left, text=black] {$e_w$};

\node[wnode] (vt1) at (-0.6, 3.2) {};
\node[wnode] (vt2) at (0.6, 3.2) {};
\draw (v) -- (vt1);
\draw (v) -- (vt2);
\node at (0.05, 2.7) {$^{\dots}$};

\node[wnode] (ul1) at (-2.4, 0.9) {};
\node[wnode] (ul2) at (-2.4, -0.1) {};
\draw (u) -- (ul1);
\draw (u) -- (ul2);
\node[rotate=90] at (-1.9, 0.5) {$^{\dots}$};

\node[wnode] (ub1) at (-1.9, -1.9) {};
\node[wnode] (ub2) at (-1.3, -2.6) {};
\draw (up) -- (ub1);
\draw (up) -- (ub2);
\node[rotate=-60] at (-1.6, -1.65) {$^{\dots}$};

\node[wnode] (wr1) at (2.4, 0.9) {};
\node[wnode] (wr2) at (2.4, -0.1) {};
\draw (w) -- (wr1);
\draw (w) -- (wr2);
\node[rotate=90] at (2, 0.5) {$^{\dots}$};

\node[wnode] (wb1) at (1.3, -2.6) {};
\node[wnode] (wb2) at (1.9, -1.9) {};
\draw (wp) -- (wb1);
\draw (wp) -- (wb2);
\node[rotate=60] at (1.6, -1.65) {$^{\dots}$};

\draw[triangle] (vt1) -- ++(0.1, 1.4) -- ++(-0.9, -0.6) -- cycle;
\draw[triangle] (vt2) -- ++(0.1, 1.4) -- ++(0.9, -0.6) -- cycle;

\draw[triangle] (ul1) -- ++(-0.8, 0.8) -- ++(-0.2, -1.2) -- cycle;
\draw[triangle] (ul2) -- ++(-0.9, 0.2) -- ++(0.2, -1.0) -- cycle;

\draw[triangle] (ub1) -- ++(-0.5, -1.2) -- ++(-0.8, 0.4) -- cycle;
\draw[triangle] (ub2) -- ++(-0.4, -1.2) -- ++(0.7, -0.2) -- cycle;

\draw[triangle] (wr1) -- ++(0.8, 0.8) -- ++(0.2, -1.2) -- cycle;
\draw[triangle] (wr2) -- ++(0.9, 0.2) -- ++(-0.2, -1.0) -- cycle;

\draw[triangle] (wb1) -- ++(0.4, -1.2) -- ++(-0.7, -0.2) -- cycle;
\draw[triangle] (wb2) -- ++(0.5, -1.2) -- ++(0.8, 0.4) -- cycle;

\end{tikzpicture}
\caption{Illustrating Case 2.}\label{fig:Case2}
\end{center}
\end{figure}
\begin{obs}\label{obs:u_uprim_w_wprim}
Consider the rooted tree $T_v$. Then:
\begin{itemize}
\item[$(a)$] Let $C'=(C_v(u) \setminus \{u'\}) \cup C_v(u')  \cup (C_v(w) \setminus \{w'\}) \cup C_v(w')$.  For each vertex $x \in C'$, any isolating ve-dominating set of $E_{T_{v,x}}$ in $T$ has at least $a_{x}$ elements in $V_{T_{v,x}}\setminus\{x\}$, where $a_{x}=|D \cap V_{T_{v,x}}|$.
\item[$(b)$] Let $C''=C_v(v) \setminus \{u,w\}$. For each vertex $y \in C''$, any isolating ve-dominating set of $E_{T_{v,y}}\setminus E^1(y)$ $E_{T_{v,y}}\setminus E^1_T(y)$ in $T$ has at least $a_{y}$ elements in $V_{T_{v,y}}$, where $a_{y}=|D \cap V_{T_{v,y}}|$.

\end{itemize}
\end{obs}

\noindent It follows from Observation~\ref{obs:u_uprim_w_wprim} that $$|D|=1+\sum_{x \in C'}a_x+\sum_{y \in C''}a_y.$$
Now, we claim that $N_T[u'] \subseteq \mathcal{N}_\iota(T)$. Indeed, suppose to the contrary that there exists $\hat{u} \in N_T[u']$ such that $\hat{u} \notin \mathcal{N}_\iota(T)$. Let $D_{\hat{u}}$ be an $\iota$-set of $T$ such that $\hat{u} \in D_{\hat{u}}$. Since ${\rm dist}_T(u,w)=2$, to isolate edge $e_w$, we must have $N[e_w] \cap D_{\hat{u}} \neq \emptyset$. But then, taking into account Observation~\ref{obs:u_uprim_w_wprim}, we obtain
$|D_{\hat{u}}| \ge 2+\sum_{x \in C'}{a_x}+\sum_{y \in C''}{a_y}>|D|,$ a contradiction.

\medskip
\noindent \textbf{Case 3}. $e$  is not a pendant edge and $u,v \in\mathcal{N}_\iota(T)$. The argument is similar to that in Case 2. Namely, let $D$ be an $\iota$-set of $T$. By the definition of $e$, we must have either $(N(u)\setminus \{v\}) \cap D \neq \emptyset$ or $(N(v)\setminus \{u\}) \cap D \neq \emptyset$. W.l.o.g.~assume that $q \in (N(v)\setminus \{u\}) \cap D$. Since $\iota(T_e)>\iota(T)$, there exists an edge $e_w$ in $E^2_T(q)$  such that:
\begin{itemize}
\item[$(a)$] $e$ and $e_w$ do not share a vertex;
\item[$(b)$] $e_w$ is a private edge for $q$ with respect to \textbf{any $\iota$-set $D^\ast$ of $T$ with $q \in D^\ast$}.
\end{itemize}
\noindent Consequently, assuming $e_w=ww'$ (where $w$ is a neighbor of $q$), still keeping in mind $\iota(T_e)>\iota(T)$, we have the following observation (see Figure~\ref{fig:Case3} for an illustration). 

\begin{figure}[h]
\begin{center}
\begin{tikzpicture}[scale=0.8,
    thick,
    bnode/.style={circle, fill=black, draw=black, inner sep=1.8pt},
    wnode/.style={circle, fill=white, draw=black, inner sep=1.8pt, minimum size=0.25cm},
    triangle/.style={draw=black, thick, fill=white, join=round}
]

\draw[line width=12pt, lightgray, cap=round] 
(-2.1, -1.5) .. controls (-2.1, -2) and (-1.5, -3) .. 
    (0, -3) .. controls (2, -3.1) and (3.1, 0) .. (2.2, 1.3);

\draw[line width=12pt, lightgray, cap=round] 
    (-1.0, 3.1) .. controls (-0.5, 3.3) and (0.5, 3.3) .. (1.0, 3.1);

\draw[line width=12pt, lightgray, cap=round] 
    (-2.35, 1.1) .. controls (-2.5, 0.5) .. (-2.4, -0.2);

\node at (-2, -1.1) {$^{C'}$};
\node at (1.6, 3.1) {$^{C''}$};
\node at (-2.0, 1.6) {$^{C'''}$};

\node[bnode, label=below:{$^q$}] (q) at (0, 2) {};
\node[wnode, label=right:{$_v$}] (v) at (-1.5, 0.5) {};
\node[wnode, label=right:{$_u$}] (u) at (-1.5, -1) {};
\node[wnode, label=left:{$_w$}]  (w) at (1.5, 0.5) {};
\node[wnode, label=left:{$_{w'}$}] (wp) at (1.5, -1) {};

\draw[line width=1pt] (q) -- (v);
\draw[line width=2pt] (v) -- (u) node[midway, right, text=black] {$^e$};
\draw[line width=1pt] (q) -- (w);
\draw[line width=2pt] (w) -- (wp) node[midway, left, text=black] {$^{e_w}$};

\node[wnode] (qt1) at (-0.6, 3.2) {};
\node[wnode] (qt2) at (0.6, 3.2) {};
\draw (q) -- (qt1);
\draw (q) -- (qt2);
\node at (0.05, 2.7) {$^{\dots}$};

\node[wnode] (vl1) at (-2.4, 0.9) {};
\node[wnode] (vl2) at (-2.4, -0.1) {};
\draw (v) -- (vl1);
\draw (v) -- (vl2);
\node[rotate=90] at (-1.9, 0.5) {$^{\dots}$};

\node[wnode] (ul1) at (-1.9, -1.9) {};
\node[wnode] (ul2) at (-1.3, -2.6) {};
\draw (u) -- (ul1);
\draw (u) -- (ul2);
\node[rotate=-60] at (-1.6, -1.65) {$^{\dots}$};

\node[wnode] (wr1) at (2.4, 0.9) {};
\node[wnode] (wr2) at (2.4, -0.1) {};
\draw (w) -- (wr1);
\draw (w) -- (wr2);
\node[rotate=90] at (2, 0.5) {$^{\dots}$};

\node[wnode] (wb1) at (1.3, -2.6) {};
\node[wnode] (wb2) at (1.9, -1.9) {};
\draw (wp) -- (wb1);
\draw (wp) -- (wb2);
\node[rotate=60] at (1.6, -1.65) {$^{\dots}$};

\draw[triangle] (qt1) -- ++(0.1, 1.4) -- ++(-0.9, -0.6) -- cycle;
\draw[triangle] (qt2) -- ++(0.1, 1.4) -- ++(0.9, -0.6) -- cycle;

\draw[triangle] (vl1) -- ++(-0.8, 0.8) -- ++(-0.2, -1.2) -- cycle;
\draw[triangle] (vl2) -- ++(-0.9, 0.2) -- ++(0.2, -1.0) -- cycle;

\draw[triangle] (ul1) -- ++(-0.5, -1.2) -- ++(-0.8, 0.4) -- cycle;
\draw[triangle] (ul2) -- ++(-0.4, -1.2) -- ++(0.7, -0.2) -- cycle;

\draw[triangle] (wr1) -- ++(0.8, 0.8) -- ++(0.2, -1.2) -- cycle;
\draw[triangle] (wr2) -- ++(0.9, 0.2) -- ++(-0.2, -1.0) -- cycle;

\draw[triangle] (wb1) -- ++(0.4, -1.2) -- ++(-0.7, -0.2) -- cycle;
\draw[triangle] (wb2) -- ++(0.5, -1.2) -- ++(0.8, 0.4) -- cycle;

\end{tikzpicture}
\caption{Illustrating Case 3.}\label{fig:Case3}
\end{center}
\end{figure}

\begin{obs}\label{obs:v_u_w_wprim}
Consider the rooted tree $T_q$. Then:
\begin{itemize}
\item[$(a)$] Let $C'=C_q(u)  \cup (C_q(w) \setminus \{w'\}) \cup C_q(w')$. For each vertex $x \in C'$, any isolating ve-dominating set of $E_{T_{q,x}}$ in $T$ has at least $b_{x}$ elements in $V_{T_{q,x}}\setminus\{x\}$, where $b_{x}=|D \cap V_{T_{q,x}}|$.
\item[$(b)$] Let $C''=C_q(q) \setminus \{v,w\}$. For each vertex $y \in C''$, any isolating ve-dominating set of $E_{T_{q,y}}\setminus E^1_T(y)$ in $T$ has at least $b_{y}$ elements in $V_{T_{q,y}}$, where $b_{y}=|D \cap V_{T_{q,y}}|$.
\item[$(c)$] Let $C'''=C_q(v) \setminus \{u\}$. For each vertex $z \in C'''$, any isolating set of $E_{T_{q,z}}$ in $T$ has at least $b_{z}$ elements in $V_{T_{q,z}}$, where $b_{z}=|D \cap V_{T_{q,z}}|$.
\end{itemize}
\end{obs}

\noindent It follows from Observation~\ref{obs:v_u_w_wprim} that $$|D|=1+\sum_{x \in C'}b_x+\sum_{y \in C''}b_y+\sum_{z \in C'''}b_z.$$

Now, similarly as in Case 2, we claim that $N_T[u] \subseteq \mathcal{N}_\iota(T)$. Indeed, suppose to the contrary that there exists $\hat{u} \in N_T[u]$ such that $\hat{u}  \notin \mathcal{N}_\iota(T)$. Let $D_{\hat{u} }$ be an $\iota$-set of $T$ such that $\hat{u}  \in D_{\hat{u}}$. Since $d_T(v,w)=2$, to isolate edge $e_w$, we must have $N[e_w] \cap D_{\hat{u} } \neq \emptyset$. But then, taking into account Observation~\ref{obs:v_u_w_wprim}, we obtain
$|D_{\hat{u}}| \ge 2+\sum_{x \in C'}{b_x}+\sum_{y \in C''}{b_y}+\sum_{z \in C'''}{b_z}>|D|,$ a contradiction.
\end{proof}

\section{Open problems}
Clearly, we are left with the problem of characterizing the classes of trees $T$ with isolation subdivision number $sd_{\iota}(T)=2,3$ and $4$, respectively. Bearing in mind Theorem~\ref{theo:sd1}, one could expect that the class of trees $T$ with $\sdi(T)=4$ is equivalent to the class of trees $T$ with $\mathcal{N}_\iota(T)=\emptyset$, however, Figure~\ref{fig:Class_4}
 shows that such a conjecture is false.

\begin{figure}[h]
\begin{center}
\includegraphics[width=0.15\textwidth]{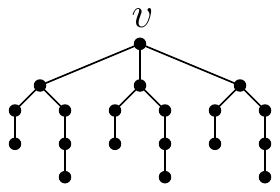}
\caption{A tree $T$ with $\sdi(T)=4$ and $\mathcal{N}_\iota(T)=\{v\} \neq \emptyset$.}\label{fig:Class_4}
\end{center}
\end{figure}

\paragraph{Algorithmic aspects.} In 2025, Chen et al.~\cite{ChLWX25} gave a linear time algorithm for determining a minimum isolating set of a tree (among other interesting algorithmic results).  A natural question is how to check algorithmically (and efficiently) whether a given tree $T$ has $sd_{\iota}(T)=1$. According to Theorem~\ref{theo:sd1}, $sd_{\iota}(T)=1$ if and only if there is at least one vertex $v\in T$ such that $N_T[v] \subseteq \mathcal{N}_{\iota}(T)$.  Recall that there are several techniques for determining the domination number in trees in linear time~\cite{Ch13}, and one can also argue that the relevant standard dynamic programming-based approach, combined with rooting technique, allows us to compute whether a given vertex $v$ belongs $\mathcal{N}_\iota(T)$ to in linear time either, and therefore, the set $\mathcal{N}_\iota(T)$ can be computed in quadratic time. Then, by  traversing a given tree in a DFS-like manner, we can easily verify in linear time whether there is at least one vertex $v\in T$ such that $N_T[v] \subseteq \mathcal{N}_{\iota}(T)$ (and so whether $sd_{\iota}(T)=1$), which results in total quadratic time either.

Nevertheless, we expect that by exploiting a common generalized re-rooting technique, as it was done in~\cite{ZZ25} for a few domination-like problems, given a tree $T$, one can compute the set $\mathcal{N}_\iota(T)$ in linear time, which immediately results in linear time algorithm for verifying whether $sd_{\iota}(T)=1$. However, since the formal description looks to be very technical and tedious, but not inspiring, we decided to leave the formal proof for a future research.

\end{document}